\def\@cite#1#2{[{{\bfseries #1}\if@tempswa , #2\fi}]}
\renewcommand{\section}{%
\@startsection{section}{1}{\z@}
{0.5truecm plus -1ex minus -.2ex}%
{1.0ex plus .2ex}{\bfseries\large}}
\def\@seccntformat#1{\csname the#1\endcsname.\ }
\numberwithin{equation}{section} 
\theoremstyle{theorem}
\newtheorem{thm}{Theorem}[section]
\newtheorem{lem}[thm]{Lemma}
\theoremstyle{definition}
\newtheorem*{prth1.1}{Proof of Theorem 1.1}
\newtheorem*{prth1.1c}{Proof of Theorem 1.1 (continued)}
\newtheorem*{prcor1.2}{Proof of Corollary 1.2}
\newtheorem*{prth1.3}{Proof of Theorem 1.3}
\newcommand{\ep}{\varepsilon}
\newcommand{\pa}{\partial}
\newcommand{\Rn}{\mathbb{R}^n}
\newcommand{\R}{\mathbb{R}}
\newcommand{\lam}{\lambda}
\newcommand{\cl}[1]{{\overline#1}}
\newcommand{\Tmax}{T_{\rm max}}
\newcommand{\I}{\mathcal{I}}
\newcommand{\tmax}{T_{\rm max}}
\begin{document}
\footnote[0]{
    2020{\it Mathematics Subject Classification}\/. 
    Primary: 35A01; 
    Secondary: 35Q92, 92C17.
    }
\footnote[0]{
    {\it Key words and phrases}\/:
    chemotaxis; global existence; boundedness; 
    tumor angiogenesis
    }
\begin{center} 
    \Large{{\bf 
    Global existence and boundedness 
    in\\
    a chemotaxis-convection model with\\ sensitivity functions for tumor angiogenesis
    }
    }
\end{center}
\vspace{5pt}
\begin{center}
    Yutaro Chiyo\footnote{Corresponding author.}\footnote{Partially supported by JSPS KAKENHI Grant Number JP22J11422.}\\
    \vspace{2mm}
    Department of Mathematics, 
    Tokyo University of Science\\
    1-3, Kagurazaka, Shinjuku-ku, 
    Tokyo 162-8601, Japan\\
    \vspace{6mm}

    Masaaki Mizukami\\
    \vspace{2mm}
    Department of Mathematics, Faculty of Education, 
    Kyoto University of Education \\
    1, Fujinomori, Fukakusa, Fushimi-ku, Kyoto 
    612-8522, Japan\\
%
   \footnote[0]{
    E-mail: 
    {\tt ycnewssz@gmail.com}, 
    {\tt masaaki.mizukami.math@gmail.com}
    }\\
    \vspace{2pt}
\end{center}
\begin{center}    
    \small \today
\end{center}

\vspace{2pt}
\newenvironment{summary}
{\vspace{.5\baselineskip}\begin{list}{}{%
     \setlength{\baselineskip}{0.85\baselineskip}
     \setlength{\topsep}{0pt}
     \setlength{\leftmargin}{12mm}
     \setlength{\rightmargin}{12mm}
     \setlength{\listparindent}{0mm}
     \setlength{\itemindent}{\listparindent}
     \setlength{\parsep}{0pt}
     \item\relax}}{\end{list}\vspace{.5\baselineskip}}
\begin{summary}
{\footnotesize {\bf Abstract.}
    This paper deals with the fully parabolic chemotaxis-convection model with sensitivity functions for tumor angiogenesis, 
%
    \begin{align*}
        \begin{cases}
        u_t=\Delta u-\nabla \cdot (u\chi_1(v)\nabla v)
                         +\nabla \cdot (u\chi_2(w)\nabla w),
        &x \in \Omega,\ t>0,
    \\[1.05mm]
        v_t=\Delta v+\nabla \cdot (v\xi(w)\nabla w)+\alpha u-\beta v,
        &x \in \Omega,\ t>0,
    \\[1.05mm]
        w_t=\Delta w+\gamma u-\delta w,
        &x \in \Omega,\ t>0
        \end{cases}
    \end{align*}
    under homogeneous Neumann boundary conditions and initial conditions, 
where $\Omega \subset \mathbb{R}^n$ $(n \le 3)$ is a bounded domain with
smooth boundary, 
$\chi_1, \chi_2, \xi$ are functions satisfying some conditions and 
$\alpha, \beta, \gamma, \delta>0$ are constants. 
The purpose of this paper is to establish global existence and boundedness in this system. 
}
\end{summary}
\vspace{10pt}

\newpage

\section{Introduction} \label{Sec1}

The {\it cancer} is one of the important study themes 
for human health.
In recent years, to analyze properties of the cancer mathematically, 
partial differential equations describing 
related biological phenomena 
have been studied intensively. 
In this paper we focus on a {\it tumor angiogenesis}.
The prototype of chemotaxis-convection models of capillary-sprout growth during tumor angiogenesis, 
\begin{align*}
	\begin{cases}
		u_t=\Delta u-\nabla \cdot (\chi_1u\nabla v)+\nabla \cdot (\chi_2u\nabla w),
		\\[1.05mm]
		v_t=\Delta v+\nabla \cdot (\xi v\nabla w)+\alpha u-\beta v,
		\\[1.05mm]
		\tau w_t=\Delta w+\gamma u-\delta w, 
	\end{cases}
\end{align*}
where $\chi_1, \chi_2, \xi, \alpha, \beta, \gamma, \delta>0$ and 
$\tau \in \{0,1\}$ are constants, 
was proposed by Orme and Chaplain~\cite{OC-1996} 
to describe the branching of capillary sprout during tumor angiogenesis 
according to chemotaxis. 
Here {\it chemotaxis} is the property of cells to move in a directional manner 
in response to concentration gradients of chemical substances. 
In this system the unknown functions $u, v, w$ idealize the density of endothelial cells, 
the concentration of a certain adhesive chemical, 
the distribution of the so-called extracellular matrix, respectively. 
Recently, important mathematical insights 
were given in 
\cite{BZ-2021, JX-2023, LT-2020, R-2021, SL-2021, TW-2021, ZK-2023}.  
Li and Tao~\cite{LT-2020} showed global existence and boundedness in the one-dimensional setting. 
After that,
Ren~\cite{R-2021} derived 
small data global existence and stabilization in the two-dimensional setting. 
Also, Tao and Winkler~\cite{TW-2021} established
global existence and boundedness 
when $\tau=0$. 
More related works 
can be found in \cite{BZ-2021, JX-2023, SL-2021, ZK-2023}. 
\medskip 

In this paper we are particularly interested in 
the case that $\chi_1, \chi_2, \xi$ are functions.
We would like to develop a mathematical analysis 
of the system with $\tau = 1$ in general dimensions according to the following two points of view. 
\begin{itemize}
\setlength{\leftskip}{0.5mm}
   \item Modelling in chemotaxis systems (the above system with $\chi_2=\xi=0$). 
   \vspace{-2mm}
   \item Mathematical analysis in attraction-repulsion systems 
            (the system with $\xi=0$). 
\end{itemize}
In the first point of view, 
Keller and Segel~\cite{KS-1970,KS-1971} proposed
chemotaxis systems which describe concentration phenomena 
such as in populations of cells; 
they first introduced the systems when $\chi_1$ is a constant in \cite{KS-1970}; 
after that, in \cite{KS-1971}, they proposed the systems when $\chi_1$ is a {\it sensitivity function}
to take into account the Weber--Fechner law. 
Moreover, Painter and Hillen~\cite[p.\ 507]{PH-2002} 
explained the significance of sensitivity functions
in view of the receptor response law as follows: 

{\it 
\begin{quote}
If binding and disassociation occur on a much faster timescale than 
subsequent signal transduction, and the total number of cell surface receptors, 
$R_0$, remains about constant, then the number of activated receptors is 
$R_a = R_0v/(K +v)$, $K = k_-/k_+$. 
Assuming the chemotactic
response relates to the number of occupied receptors, $\tau(v) \propto R_a$ then
we obtain a receptor response law, [\textbf{\textit{55}}]:
\[
\chi(v)=\frac{\rho}{(K+v)^2}
\]
where $\rho$ is a constant.
\end{quote}
}

\noindent
In this sense it will be meaningful to study chemotaxis systems with sensitivity functions 
from biological points of view. 
In the second point from mathematical view, 
when $\chi_1, \chi_2$ are functions and $\xi=0$,  
global existence and boundedness were proved in \cite{CMY-2020} 
by a method using a test function defined as a combination of 
an exponential function and integrals of $\chi_1, \chi_2$. 
Also, as related works by utilizing a test function we can refer 
Fujie~\cite{F-2015}, 
Fujie and Senba~\cite{FS-2016-2,FS-2016-1}, 
Ahn~\cite{A-2019}, 
Frassu and Viglialoro~\cite{FV-2022}, 
Frassu, Rodr\'{\i}guez Galv\'{a}n and Viglialoro~\cite{FRV-2023}. 
Thus we expect that testing methods can be 
applied to a tumor angiogenesis model 
with sensitivity functions.%
\medskip

Based on the above observations, the purpose of this paper is to establish 
global existence and boundedness in the problem,
%
    \begin{align} \label{ARC}
        \begin{cases}
        u_t=\Delta u-\nabla \cdot (u\chi_1(v)\nabla v)
                         +\nabla \cdot (u\chi_2(w)\nabla w),
        &x \in \Omega,\ t>0,
    \\[1.05mm]
        v_t=\Delta v+\nabla \cdot (v\xi(w)\nabla w)+\alpha u-\beta v,
        &x \in \Omega,\ t>0,
    \\[1.05mm]
        w_t=\Delta w+\gamma u-\delta w,
        &x \in \Omega,\ t>0,
    \\[1.8mm]
        \nabla u \cdot \nu=\nabla v \cdot \nu=\nabla w \cdot \nu=0, 
        &x \in \pa \Omega,\ t>0,
    \\[1.05mm]
        u(x, 0)=u_0(x),\ v(x, 0)=v_0(x),\ w(x, 0)=w_0(x),
        &x \in \Omega,
        \end{cases}
    \end{align}
where $\Omega \subset \Rn$ $(n \le 3)$ is a bounded domain 
with smooth boundary $\pa \Omega$; 
$\nu$ is the outward normal vector to $\pa \Omega$; 
$\alpha, \beta, \gamma, \delta>0$ are constants; 
$u, v, w$ are unknown 
functions; $u_0, v_0, w_0$ are initial data fulfilling
%
    \begin{align} \label{KF}
            &u_0 \in C^0(\cl{\Omega}) \setminus\{0\}
    \quad
            {\rm and}
    \quad 
            u_0 \ge0\ {\rm in}\ \Omega,
\\[2mm] \label{KF2}
            &v_0, w_0 \in W^{1,\infty}(\Omega)
    \quad 
            {\rm and}
    \quad
               v_0, w_0 \ge 0\ {\rm in}\ \Omega.
    \end{align}
Also, $\chi_1, \chi_2$ are positive known 
functions satisfying
%
\begin{align}
	\label{chi1class}
	&\chi_i \in C^{1+\vartheta_i}([0,\infty))\ 
	(0<\exists\, \vartheta_i<1),
	\quad 
	\chi_i>0\quad (i \in \{1,2\}),
	\\[2mm]
	\label{chi1}
	&\sup_{s>0}s\chi_i(s)<\infty\quad (i \in \{1,2\}),	
	\\[2mm]
	\label{dchi1}
	&
	\exists\,K_i>0;\quad 
	\chi_i'(s)+K_i|\chi_i(s)|^2\le0\quad 
	{\rm for\ all}\ s \ge 0\quad (i \in \{1,2\})
\end{align}
and $\xi$ is a positive known function fulfilling 
\begin{align}
	\label{xiclass}
	&\xi \in C^{1+\vartheta_3}([0,\infty))\ 
	(0<\exists\, \vartheta_3<1),
	\quad 
	\xi>0,
	\\[2mm]
	\label{xi}
	&
	\exists\,\xi_0>0;\quad \xi(s)\le\xi_0\chi_2(s)\quad 
	{\rm for\ all}\ s \ge 0, 
	\\[2mm]
	\label{dxi}
	&
	\exists\,K_3>0;\quad 
	\xi'(s)\le K_3\quad 
	{\rm for\ all}\ s \ge 0.
\end{align}
The example of $\chi_1, \chi_2, \xi$ are as follows: 
\begin{align*}
	\chi_1(s)=\frac{a_1}{(b_1+s)^{k_1}},\quad 
	\chi_2(s)=\frac{a_2}{(b_2+s)^{k_2}},\quad 
	\xi(s)=\frac{a_3}{(b_3+s)^{k_3}}
	\quad {\rm for}\ s \ge0
\end{align*}
with $a_1, a_2, a_3>0$, $b_1, b_2, b_3 \ge 0$ and $k_1, k_2, k_3>1$. 
In this example the constants $K_1, K_2$ 
in \eqref{dchi1} are given by $\frac{b_1k_1}{a_1}, \frac{b_2k_2}{a_2}$, respectively. 
Therefore some largeness conditions for $K_1, K_2$ 
imply that $a_1, a_2$ are small. 
\medskip

\noindent
{\bf Main result.} 
We obtain the following theorem which guarantees 
global existence and boundedness in \eqref{ARC} 
under some smallness conditions for $\chi_1, \chi_2, \xi$. 
%
\begin{thm} \label{mainthm}
 Let\/ $\Omega \subset \Rn$ $(n \le 3)$ be a bounded domain 
 with smooth boundary. 
 Assume that $\chi_1, \chi_2$ 
 satisfy \eqref{chi1class}--\eqref{dchi1} with $K_1, K_2$ fulfilling
 %
 \begin{align} \label{condipara}
 	K_1
 	>\min_{\lam\in J}\frac{2(2\lam+1)(3\lam+4)+\sqrt{D_\lam}}
 	{2(K_2-4)-\lam^2-2},
 	\quad 
 	K_2>4+\sqrt{2},
 \end{align}
 where 
 $J:= 
 \left(K_2-4-\sqrt{(K_2-4)^2-2},\, 
 K_2-4+\sqrt{(K_2-4)^2-2}\,\right)$
 and where
 %
 \begin{align*}
 	D_\lam
 	:= 4\lam\Big(2\left(K_2-4\right)+3\lam+8\Big)
 	\Big(\lam\left(K_2-4\right)+4\lam^2+12\lam+7\Big).
 \end{align*}
 Then there exists a constant $\xi_0^*>0$ such that 
 for all $\xi$ satisfying \eqref{xiclass}, \eqref{xi} with some
 $\xi_0 \in (0, \xi_0^*)$ as well as \eqref{dxi}, and all 
 $(u_0, v_0, w_0)$ fulfilling \eqref{KF}, \eqref{KF2} there exists a unique triplet $(u, v, w)$ of nonnegative functions
%
    \begin{align*}
        &u \in C^0(\cl{\Omega} \times [0,\infty)) 
                 \cap C^{2,1}(\cl{\Omega} \times (0,\infty)),
    \\
        &v, w \in \bigcap_{q>n} C^0([0,\infty); W^{1,q}(\Omega)) 
                 \cap C^{2,1}(\cl{\Omega} \times (0,\infty)),
    \end{align*}
 which solves \eqref{ARC} in the classical sense. 
 Also, the solution is bounded in the sense that
%
    \begin{align*}
           \|u(\cdot, t)\|_{L^\infty(\Omega)}
    \le C
    \end{align*}
 for all $t>0$ with some $C>0$.
\end{thm}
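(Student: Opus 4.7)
The approach is the now-standard three-step scheme for chemotaxis-type systems: (i) local-in-time existence of a classical solution on a maximal interval $[0,\Tmax)$ with the extensibility criterion that $\Tmax<\infty$ forces $\|u(\cdot,t)\|_{L^\infty(\Omega)}\to\infty$; (ii) a priori $L^p$-bound for $u$ with some $p>n/2$; (iii) bootstrapping to an $L^\infty$-bound for $u$ via Neumann heat-semigroup estimates, after which (i) gives $\Tmax=\infty$. Step (i) is routine via a Banach fixed point in $C([0,T];W^{1,q}(\Omega))$ with $q>n$, and the mass identity $\int_\Omega u(\cdot,t)=\int_\Omega u_0$ follows from integrating the first equation.

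The heart of the argument is step (ii), which I would carry out by a testing-function method in the spirit of \cite{CMY-2020}. Concretely, I would work with a functional of the form
\[
\mathcal{F}(t):=\int_\Omega (u+1)^p\,\Phi(v)^{a}\,\Psi(w)^{b}\,dx,
\qquad
\Phi(v)=\exp\Big(\int_0^v\chi_1(s)\,ds\Big),
\quad
\Psi(w)=\exp\Big(\int_0^w\chi_2(s)\,ds\Big),
\]
with exponents $a,b\in\R$ (one negative, one positive) and a parameter $\lam>0$ to be optimized. Differentiating $\mathcal{F}$ in time, substituting the three PDEs, and integrating by parts produces, after completing squares, a quadratic form in the gradient products $|\nabla u|$, $u|\nabla v|$, $u|\nabla w|$, $|\nabla v|$, $|\nabla w|$. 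The pointwise hypothesis $\chi_i'+K_i\chi_i^2\le 0$ is exactly what is needed to turn the bad cross terms $\chi_i'|\nabla\cdot|^2$ into negative multiples of $\chi_i^2|\nabla\cdot|^2$, and the relation $\xi\le\xi_0\chi_2$ lets me control the extra flux $\nabla\cdot(v\xi(w)\nabla w)$ in the $v$-equation by an $\xi_0$-small perturbation of a $\chi_2$-term. Requiring the resulting quadratic form to be nonpositive identifies an admissible range of $\lam$ and, after minimization in $\lam$, yields precisely the algebraic conditions \eqref{condipara} together with the threshold $\xi_0^*$. This gives a differential inequality $\mathcal{F}'\le C-c\,\mathcal{F}$, hence an a priori bound on $\|u(\cdot,t)\|_{L^p(\Omega)}$ for a suitable $p>n/2$.

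Step (iii) is then standard. From the $L^p$-bound on $u$ and the $w$-equation one obtains $\|\nabla w(\cdot,t)\|_{L^q(\Omega)}\le C$ for arbitrarily large $q$ via the well-known smoothing estimates for $e^{t\Delta}$ under Neumann conditions. Plugging this into the $v$-equation, the additional drift $\nabla\cdot(v\xi(w)\nabla w)$ is bounded in some $L^r$, so the same semigroup argument gives $\|\nabla v(\cdot,t)\|_{L^q(\Omega)}\le C$. A Moser-type iteration (or a further semigroup bootstrap) applied to the $u$-equation with these gradient bounds then upgrades the $L^p$-bound to the desired $L^\infty$-bound on $u$, and uniqueness in the stated class is classical.

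The main obstacle is step (ii): the simultaneous presence of the attractive flux $-\nabla\cdot(u\chi_1(v)\nabla v)$, the repulsive flux $+\nabla\cdot(u\chi_2(w)\nabla w)$, and the new $v$-equation transport $\nabla\cdot(v\xi(w)\nabla w)$ means that the testing functional must carry two exponential weights and still produce a single coercive quadratic form in five gradient variables. The delicate point is choosing the exponents $a,b$ and the parameter $\lam$ so that all cross terms are absorbable; the conditions $K_2>4+\sqrt{2}$ and the $\lam$-minimization in \eqref{condipara} are exactly the feasibility conditions for this quadratic form, and the smallness $\xi_0<\xi_0^*$ is what makes room for the $\xi$-perturbation inside this already-tight inequality.
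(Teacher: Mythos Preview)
Your step~(ii) is essentially the paper's Section~3: the weight $\Phi(v)^{a}\Psi(w)^{b}$ is the paper's test function $f=\exp\bigl(-r\int_0^v\chi_1-\sigma\int_0^w\chi_2\bigr)$ (both exponents negative, and $p=2$ suffices since $2>\tfrac{n}{2}$ for $n\le 3$), and the quadratic form to be made negative is in the \emph{three} quantities $u^{-1}|\nabla u|$, $\chi_1(v)|\nabla v|$, $\chi_2(w)|\nabla w|$, not five. Its negative definiteness is checked via the Sylvester criterion; a continuity argument in $(\ep,\xi_0)$ reduces this to the $\xi_0=0$ case already handled in \cite{CMY-2020}, which is exactly where \eqref{condipara} and the threshold $\xi_0^*$ originate.

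The real gap is in your step~(iii). You assert that once $\nabla w\in L^q$ is known, ``the same semigroup argument gives $\|\nabla v(\cdot,t)\|_{L^q}\le C$''. This is precisely the step that does \emph{not} go through when $\xi\not\equiv 0$, and the paper flags it as the main difficulty. In the Duhamel formula for $\nabla v$, the drift contributes
\[
\int_0^t \nabla e^{(t-s)(\Delta-\beta)}\,\nabla\cdot\bigl(v\,\xi(w)\,\nabla w\bigr)(s)\,ds,
\]
and the composite $\nabla e^{\tau\Delta}\nabla\cdot$ carries a short-time singularity of order $\tau^{-1}$ in any $L^r\to L^r$ estimate, which is not integrable near $s=t$; if instead you expand the divergence, the term $\xi(w)\,\nabla v\cdot\nabla w$ reintroduces $\nabla v$ at the same integrability level, so nothing closes. (Incidentally, from $u\in L^2$ one only obtains $\nabla w\in L^q$ for $q<\tfrac{2n}{(n-2)_+}$, not for arbitrarily large $q$.)

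The paper circumvents this with two additional ingredients you are missing. First (Section~4), after establishing $\|v\|_{L^\infty}\le C$ and a space--time bound $\int_t^{t+\tau}\!\int_\Omega|\Delta w|^2\le C$, a direct energy estimate on $\int_\Omega|\nabla v|^2$ absorbs the drift and gives $\nabla v\in L^2$ uniformly in time. Second (Section~5), a fractional-power argument yields $A^\rho u\in L^2$ for some $\rho\in(0,\tfrac12)$, hence a pointwise-in-time bound $\|\Delta w(\cdot,t)\|_{L^2}\le C\bigl(1+M_r^a\bigr)$ with $a<1$, where $M_r=\sup_t\|\nabla v(\cdot,t)\|_{L^r}$. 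With this, the \emph{expanded} divergence $\xi(w)\nabla v\cdot\nabla w+v\xi'(w)|\nabla w|^2+v\xi(w)\Delta w$ can be placed in $L^k$ for a suitable $k$ and controlled by a sublinear power of $M_r$ (interpolating $\nabla v$ between the known $L^2$ and the target $L^r$), so the Duhamel inequality for $M_r$ becomes self-improving. Only after this bootstrap does the $L^\infty$-bound for $u$ follow by the standard argument you describe.
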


The strategy in the proof of Theorem~\ref{mainthm} 
is to derive $L^2$-boundedness of $u$ and
$L^r$-boundedness of $\nabla v$ with some $r>n$. 
The former can be shown by 
constructing the differential inequality
\begin{align*}
	\frac{d}{dt}\int_\Omega u^2(x,t)f(x,t)\,dx
	\le c_1\int_\Omega u^2(x,t)f(x,t)\,dx
	-c_2\Big(\int_\Omega u^2(x,t)f(x,t)\,dx\Big)^{1+\theta}
\end{align*}
for some constants $c_1, c_2, \theta>0$ and some test function 
$f$ (Section~\ref{Sec3}). 
Once we obtained $L^2$-boundedness of $u$, 
the next step is to prove $L^r$-boundedness of $\nabla v$ 
with some $r>n$. 
Here in the case that $\xi=0$ it can be shown by semigroup estimates; 
however, in the case that $\xi \neq 0$ this method breaks down 
due to the presence of the term $\nabla \cdot (v\xi(w)\nabla w)$. 
On the other hand, the method for the case that $\xi$ is a constant  
in \cite{TW-2021} does not work since 
new terms which are difficult to handle 
(e.g.\ $|\nabla v|^{2r-2}v\xi'(w)\Delta w\nabla v \cdot \nabla w$)
appear. 
In order to overcome this difficulty we first observe 
$L^2$-boundedness of $\nabla v$ by an energy estimate 
(Section~\ref{Sec4}). 
We next upgrade $L^2$-boundedness of $\nabla v$ to
$L^r$-boundedness of $\nabla v$ with some $r>n$ (Section~\ref{Sec5}). 
Finally, in light of $L^r$-boundedness of $\nabla v$, we can obtain 
$L^\infty$-boundedness of $u$ (Section~\ref{Sec6}), 
which yields global existence and boundedness. 
%


\section{Preliminaries} \label{Sec2}

We first give local existence in \eqref{ARC}, 
which can be proved by standard arguments based on 
the construction mapping principle (see e.g.\ \cite{W-2010-2}). 
%
\begin{lem} \label{LSE}
 Let\/ $\Omega \subset \Rn$ $(n \ge 1)$ be a bounded domain 
 	with smooth boundary and let 
 	$(u_0, v_0, w_0)$ fulfill \eqref{KF}, \eqref{KF2}. 
 Assume that $\chi_1, \chi_2, \xi$ satisfy 
 \eqref{chi1class}, \eqref{xiclass}. 
 Then there exists $\Tmax \in (0,\infty]$ such that 
 \eqref{ARC} admits a unique classical solution 
 $(u, v, w)$ such that
%
    \begin{align*}
        &u \in C^0(\cl{\Omega} \times [0,\tmax)) 
                 \cap C^{2,1}(\cl{\Omega} \times (0,\tmax)),
    \\
        &v, w \in \bigcap_{q>n} C^0([0,\tmax); W^{1,q}(\Omega)) 
                 \cap C^{2,1}(\cl{\Omega} \times (0,\tmax)), 
    \end{align*}
 and $u, v, w$ have positivity. 
 In addition, $u$ has the mass conservation property
 \begin{align*}
 \int_\Omega u(\cdot, t)=\int_\Omega u_0
 \end{align*}
 for all $t \in (0,\tmax)$, 
and moreover, 
%
    \begin{align} \label{BU}
            {\it if}\ \Tmax<\infty,
    \quad 
            {\it then}\ \limsup_{t \nearrow \Tmax} 
                           \left(\|u(\cdot,t)\|_{L^\infty(\Omega)}
                           +\|v(\cdot,t)\|_{W^{1, \infty}(\Omega)}
                           +\|w(\cdot,t)\|_{W^{1, \infty}(\Omega)}\right)=\infty.
    \end{align}
\end{lem}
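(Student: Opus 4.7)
The plan is to prove local existence via Banach's fixed point theorem in the usual way for chemotaxis systems (cf.\ \cite{W-2010-2}). First I would fix $q>n$, choose $R>0$ large compared with $\|u_0\|_{L^\infty(\Omega)}$, $\|v_0\|_{W^{1,q}(\Omega)}$, $\|w_0\|_{W^{1,q}(\Omega)}$, fix $T\in(0,1)$ to be chosen later, and introduce the complete metric space $X_T$ of triples $(\bar u,\bar v,\bar w)$ of continuous functions on $\overline\Omega\times[0,T]$ with traces $u_0,v_0,w_0$ at $t=0$ and satisfying $\|\bar u(\cdot,t)\|_{L^\infty(\Omega)}\le R$, $\|\bar v(\cdot,t)\|_{W^{1,q}(\Omega)}\le R$, $\|\bar w(\cdot,t)\|_{W^{1,q}(\Omega)}\le R$ for all $t\in[0,T]$. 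Given such a triple, I would define $\Phi(\bar u,\bar v,\bar w)=(u,v,w)$ by solving the three linear parabolic problems obtained by freezing the coefficients $\chi_1(\bar v)$, $\chi_2(\bar w)$, $\xi(\bar w)$ and the sources $\alpha\bar u$, $\gamma\bar u$. Since \eqref{chi1class} and \eqref{xiclass} make $\chi_1,\chi_2,\xi$ locally Lipschitz on $[0,\infty)$, these frozen coefficients are bounded in the relevant norms by constants depending only on $R$ and the initial data.

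Next I would use the Neumann heat semigroup smoothing estimates of the form $\|\nabla e^{t\Delta}\varphi\|_{L^q(\Omega)}\le C t^{-1/2}\|\varphi\|_{L^q(\Omega)}$ and $\|\nabla e^{t\Delta}\nabla\cdot\psi\|_{L^q(\Omega)}\le C t^{-1/2-n/(2q)}\|\psi\|_{L^{q/2}(\Omega)}^{1/2}$-type bounds, combined with the variation-of-constants representations for $u$, $v$, $w$, to show that $\Phi$ maps $X_T$ into itself provided $T$ is sufficiently small: the extra factor $T^\sigma$ with $\sigma>0$ obtained from the time integrals of these estimates absorbs all nonlinear contributions. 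A parallel estimate on differences of two inputs yields the contraction property. Banach's theorem then produces a unique fixed point $(u,v,w)\in X_T$, and parabolic Schauder bootstrap upgrades its regularity to the stated $C^{2,1}$-smoothness away from $t=0$. Extending the construction by reapplication from any later time at which the solution is known to be bounded, one obtains a maximal existence time $\tmax$ together with the blow-up alternative \eqref{BU}.

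Positivity of $u,v,w$ follows from the parabolic maximum principle applied to each equation individually: since $v,w\ge 0$ imply that the coefficients of the divergence-form $u$-equation are smooth, the nonnegativity of $u_0$ gives $u\ge 0$; in turn the sources $\alpha u$ and $\gamma u$ are nonnegative, so the linear parabolic equations for $v$ and $w$ with nonnegative initial data produce nonnegative solutions. The mass-conservation identity is obtained by integrating the $u$-equation over $\Omega$, the divergence terms vanishing due to the homogeneous Neumann boundary condition.

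The main technical obstacle I expect is the cross-diffusion term $\nabla\cdot(v\xi(w)\nabla w)$ in the $v$-equation: because it contains a derivative of $w$, the iteration space must control $\nabla w$ in $L^q$ with $q>n$ rather than merely $L^\infty$, and this $W^{1,q}$-bound on $w$ has to be recovered on short time intervals from the parabolic smoothing of the $w$-equation with source $\gamma u\in L^\infty$. Once this space is set up correctly, the remaining self-mapping and contraction estimates are routine applications of Hölder's inequality and the semigroup bounds.
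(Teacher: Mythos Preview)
Your proposal is correct and follows exactly the approach the paper indicates: the paper does not give a detailed proof but simply remarks that local existence ``can be proved by standard arguments based on the contraction mapping principle (see e.g.\ \cite{W-2010-2}),'' which is precisely the fixed-point scheme you outline. In fact you supply considerably more detail than the paper itself, including the handling of the cross-diffusion term in the $v$-equation, so nothing needs to be changed.
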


In the following we assume that $\Omega \subset \R^n$ 
$(n \le 3)$ 
is a bounded domain with smooth boundary and that 
$\chi_1, \chi_2, \xi$ fulfill \eqref{chi1class}, \eqref{xiclass} as well as that
$(u_0, v_0, w_0)$ satisfies \eqref{KF}, \eqref{KF2}. 
We then let the triplet $(u, v, w)$ denote the local classical
solution of \eqref{ARC} given in Lemma~\ref{LSE} and 
$\Tmax$ denote its maximal existence time. 
\medskip

We next state a lemma which guarantees $L^1$-boundedness of $v, w$. 

\begin{lem}\label{L1vw}
   Let\/ $\Omega \subset \Rn$ $(n \ge 1)$ be a bounded domain 
  	with smooth boundary and let
  	$(u_0, v_0, w_0)$ fulfill \eqref{KF}, \eqref{KF2}. 
  Assume that $\chi_1, \chi_2, \xi$ satisfy 
  \eqref{chi1class}, \eqref{xiclass}. 
  Then there exists $C>0$ such that 
 \begin{align*}
 	\int_\Omega v(\cdot,t) \le C,\qquad \int_\Omega w(\cdot,t)\le C
 \end{align*}
for all $t \in (0,\tmax)$. 
\end{lem}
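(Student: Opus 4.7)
The plan is to reduce the problem to two standard linear ODEs for the total masses $\int_\Omega v(\cdot,t)$ and $\int_\Omega w(\cdot,t)$, exploiting the mass conservation property of $u$ already provided by Lemma~\ref{LSE}. Setting $m := \int_\Omega u_0$, we have $\int_\Omega u(\cdot,t) = m$ on $(0,\tmax)$.

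First I would handle $w$, which is the easier of the two since its equation has no chemotactic flux. Integrating the third equation of \eqref{ARC} over $\Omega$ and applying the divergence theorem together with the homogeneous Neumann condition $\nabla w \cdot \nu = 0$ eliminates the Laplacian contribution, leaving
\begin{align*}
   \frac{d}{dt}\int_\Omega w(\cdot,t) = \gamma m - \delta \int_\Omega w(\cdot,t).
\end{align*}
A direct integration (or a standard ODE comparison argument) yields
\begin{align*}
   \int_\Omega w(\cdot,t) \le \max\Bigl\{\int_\Omega w_0,\ \tfrac{\gamma m}{\delta}\Bigr\} \qquad \text{for all } t \in (0,\tmax).
\end{align*}

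Second, I would treat $v$ by the same strategy, the only extra point being the cross-diffusion term $\nabla \cdot (v\xi(w)\nabla w)$. Integrating the second equation of \eqref{ARC} over $\Omega$ and invoking the divergence theorem, the Laplacian term vanishes by $\nabla v \cdot \nu = 0$, and the cross-diffusion term contributes a boundary integral of $v\xi(w)\nabla w \cdot \nu$, which is zero thanks to $\nabla w \cdot \nu = 0$ on $\partial\Omega$. Hence
\begin{align*}
   \frac{d}{dt}\int_\Omega v(\cdot,t) = \alpha m - \beta \int_\Omega v(\cdot,t),
\end{align*}
and the same ODE comparison gives
\begin{align*}
   \int_\Omega v(\cdot,t) \le \max\Bigl\{\int_\Omega v_0,\ \tfrac{\alpha m}{\beta}\Bigr\} \qquad \text{for all } t \in (0,\tmax).
\end{align*}
Taking $C$ to be the maximum of the two bounds finishes the argument.

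There is no real obstacle here; the only subtlety worth flagging is the verification that the cross-diffusive flux $v\xi(w)\nabla w$ satisfies no-flux boundary behavior, which follows immediately from the Neumann condition on $w$ rather than requiring any regularity beyond what is already guaranteed by Lemma~\ref{LSE}.
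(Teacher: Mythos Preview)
Your argument is correct and follows essentially the same route as the paper: integrate the $v$- and $w$-equations over $\Omega$, use the Neumann boundary conditions and the mass conservation $\int_\Omega u = \int_\Omega u_0$ to obtain linear ODEs for the total masses, and conclude boundedness by an ODE comparison. The paper is simply more terse and writes the resulting differential inequality directly; your version makes the vanishing of the cross-diffusive boundary flux (via $\nabla w\cdot\nu=0$) explicit, which is a welcome clarification.
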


\begin{proof}
  Integrating the second equation in \eqref{ARC} over $\Omega$ and 
  noting the Neumann boundary condition for $v$ in \eqref{ARC} yield
  \begin{align*}
  	\frac{d}{dt}\int_\Omega v \le -\beta \int_\Omega v+c_1
  \end{align*}
 with some $c_1>0$, which implies that 
 $L^1$-boundedness of $v$ holds. 
 Similarly, we can also verify $L^1$-boundedness of $w$. 
\end{proof}

\section{$L^2$-boundedness of $u$} \label{Sec3}

In this section we will show $L^2$-boundedness of $u$. 
We introduce the function $f=f(x,t)$ by
\begin{align*}
f(x,t):=\exp\left(-r\int_0^{v(x,t)} \chi_1(s)\,ds
-\sigma\int_0^{w(x,t)} \chi_2(s)\,ds\right),
\end{align*}
where $r,\sigma>0$ are some constants which will be fixed later. 

\begin{lem}
Let $r, \sigma>0$. 
Then for all $t \in (0, \tmax)$, 
\begin{align}\label{du2f}
   \frac{d}{dt}\int_\Omega u^2f
=I_1+I_2+I_3-r\int_\Omega u^2f\chi_1(v)(\alpha u-\beta v)-\sigma\int_\Omega u^2f\chi_2(w)(\gamma u-\delta w),
\end{align}
where
\begin{align*}
I_1 &:= 2\int_\Omega uf\nabla \cdot\Big(\nabla u-u\chi_1(v)\nabla v+u\chi_2(w)\nabla w\Big),\\
I_2 &:= -r\int_\Omega u^2f\chi_1(v)\nabla \cdot \Big(\nabla v+v\xi(w)\nabla w\Big),\\
I_3 &:= -\sigma\int_\Omega u^2f\chi_2(w)\Delta w. 
\end{align*}
\end{lem}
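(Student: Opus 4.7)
The plan is to verify the identity by direct differentiation under the integral sign; this is a bookkeeping exercise rather than a genuine obstacle. First I would write
\[
\frac{d}{dt}\int_\Omega u^2 f = 2\int_\Omega u u_t f + \int_\Omega u^2 f_t
\]
and compute $f_t$ via the chain rule. By the definition of $f$,
\[
f_t = -f\bigl(r\chi_1(v)v_t + \sigma\chi_2(w)w_t\bigr),
\]
since the upper limits of the two integrals in the exponent are $v(x,t)$ and $w(x,t)$.

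Next, I would insert the PDEs from \eqref{ARC}. The $u$-equation gives $u_t = \nabla\cdot\bigl(\nabla u - u\chi_1(v)\nabla v + u\chi_2(w)\nabla w\bigr)$, so
\[
2\int_\Omega u u_t f = 2\int_\Omega u f \nabla\cdot\bigl(\nabla u - u\chi_1(v)\nabla v + u\chi_2(w)\nabla w\bigr) = I_1.
\]
Substituting the $v$-equation into $-r\int_\Omega u^2 f\chi_1(v) v_t$ splits this term into
\[
-r\int_\Omega u^2 f\chi_1(v)\nabla\cdot\bigl(\nabla v + v\xi(w)\nabla w\bigr) - r\int_\Omega u^2 f\chi_1(v)(\alpha u - \beta v),
\]
which is $I_2$ plus the first remainder on the right-hand side of \eqref{du2f}. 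Substituting the $w$-equation into $-\sigma\int_\Omega u^2 f\chi_2(w) w_t$ analogously produces $I_3$ together with $-\sigma\int_\Omega u^2 f\chi_2(w)(\gamma u - \delta w)$.

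The only care needed is the justification that the differentiation under the integral and the manipulations above are legitimate on $(0,\tmax)$. This is immediate from the regularity of $(u,v,w)$ provided by Lemma~\ref{LSE}, which ensures that $u^2 f$ is $C^1$ in $t$ with pointwise derivative given by the chain rule, and that all the integrands produced are continuous on $\overline\Omega\times(0,\tmax)$. Adding the three contributions yields exactly \eqref{du2f}, so no further argument is required.
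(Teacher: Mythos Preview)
Your proof is correct and follows exactly the same route as the paper: differentiate $\int_\Omega u^2 f$ in time, compute $f_t=-f(r\chi_1(v)v_t+\sigma\chi_2(w)w_t)$ by the chain rule, substitute the three equations from \eqref{ARC}, and read off $I_1, I_2, I_3$ together with the two remainder terms. The only addition you make over the paper is the explicit remark on regularity justifying differentiation under the integral, which is harmless.
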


\begin{proof}
By virtue of the definition of $f$, we obtain 
\begin{align*}
   \frac{d}{dt}\int_\Omega u^2f
= 2\int_\Omega u_t uf-\int_\Omega u^2 f \cdot \Big(r\chi_1(v)v_t+\sigma\chi_2(w)w_t\Big), 
\end{align*}
which in conjunction with the equations in \eqref{ARC} implies
\begin{align*}
   \frac{d}{dt}\int_\Omega u^2f
&= 2\int_\Omega uf\cdot
\Big(\Delta u-\nabla \cdot (u\chi_1(v)\nabla v)+\nabla \cdot (u\chi_2(w)\nabla w)\Big)\\
&\quad\,-r\int_\Omega u^2f\chi_1(v)\Big(\Delta v+\nabla \cdot (v\xi(w)\nabla w)+\alpha u-\beta v\Big)\\
&\quad\,-\sigma\int_\Omega u^2f\chi_2(w)\Big(\Delta w+\gamma u-\delta w\Big)\\
&= 2\int_\Omega uf\nabla \cdot\Big(\nabla u-u\chi_1(v)\nabla v+u\chi_2(w)\nabla w\Big)\\
&\quad\,-r\int_\Omega u^2f\chi_1(v)\nabla \cdot \Big(\nabla v+v\xi(w)\nabla w\Big)-\sigma\int_\Omega u^2f\chi_2(w)\Delta w\\
&\quad\,-r\int_\Omega u^2f\chi_1(v)(\alpha u-\beta v)-\sigma\int_\Omega u^2f\chi_2(w)(\gamma u-\delta w), 
\end{align*}
which concludes the proof. 
\end{proof}

The key to the derivation of $L^2$-boundedness of $u$ is 
to obtain an estimate for $I_1+I_2+I_3$.

\begin{lem}\label{LemI1I2I3}
Assume that $\chi_1, \chi_2, \xi$ satisfy \eqref{chi1class}--\/\eqref{dxi} 
 with  $K_1, K_2$ fulfilling \eqref{condipara}. 
Let $r, \sigma>0$, $\ep \in [0,1)$ and put 
\begin{align*}
\textsf{x}:=u^{-1}|\nabla u|,\quad \textsf{y}:=\chi_1(v)|\nabla v|,\quad \textsf{z}:=\chi_2(w)|\nabla w|.
\end{align*}
Then for all $t \in (0, \tmax)$, 
\begin{align}\label{I_1+I_2+I_3}
I_1+I_2+I_3 
&\le -2\ep\int_\Omega f|\nabla u|^2\notag\\
&\quad\,+\int_\Omega u^2f \cdot(-a_1(\ep)\textsf{x}^2+a_2\textsf{x}\textsf{y}+a_3(\xi_0)\textsf{x}\textsf{z}
                                                  -a_4\textsf{y}^2+a_5(\xi_0)\textsf{y}\textsf{z}-a_6\textsf{z}^2),
\end{align}
where
\begin{align*}
a_1(\ep)&:=2(1-\ep),\\
a_2&:=2(2r+1),\\
a_3(\xi_0)&:=2(2\sigma+1+b\xi_0r),\\
a_4&:=r(r+K_1+2),\\ 
a_5(\xi_0)&:=2(r\sigma+r+\sigma)+b\xi_0r(r+K_1),\\
a_6&:=\sigma(\sigma+K_2-2)
\end{align*}
with some $b>0$. 
\end{lem}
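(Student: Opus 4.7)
The identity \eqref{du2f} has already reduced matters to estimating the single sum $I_1+I_2+I_3$, so my plan is to integrate by parts in each $I_j$, apply the hypotheses \eqref{chi1}--\eqref{xi} to the pointwise coefficients produced, and reassemble the outcome as a quadratic form in $\textsf{x},\textsf{y},\textsf{z}$ against the common weight $u^2f$. The key observation driving everything is the identity $\nabla f = -f\bigl(r\chi_1(v)\nabla v + \sigma\chi_2(w)\nabla w\bigr)$, which follows directly from differentiating the exponential definition of $f$. Each time an integration by parts produces a $\nabla f$, this identity converts it into an explicit linear combination of $\chi_1\nabla v$ and $\chi_2\nabla w$, i.e.\ into contributions directly proportional to $\textsf{y}$ and $\textsf{z}$.

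Concretely, I would first expand $I_1$ via IBP to produce the Dirichlet term $-2\int f|\nabla u|^2$, the two $\nabla u$-pairings $2(1+r)\int uf\chi_1\nabla u\cdot\nabla v$ and $2(\sigma-1)\int uf\chi_2\nabla u\cdot\nabla w$, and the $\nabla f$-contributions $-2r\int u^2 f\chi_1^2|\nabla v|^2$, $2\sigma\int u^2 f\chi_2^2|\nabla w|^2$, and $2(r-\sigma)\int u^2 f\chi_1\chi_2\nabla v\cdot\nabla w$. For $I_2$ I would first write $\nabla\cdot(v\xi\nabla w) = \xi\nabla v\cdot\nabla w + v\xi'|\nabla w|^2 + v\xi\Delta w$ and integrate by parts only in the two Laplacian pieces; a direct check then shows that the $\chi_1\xi\nabla v\cdot\nabla w$ and $v\chi_1\xi'|\nabla w|^2$ terms generated by the IBP on $v\xi\Delta w$ cancel exactly against the two non-Laplacian leftovers, so $\xi'$ disappears from the final expression and only the ``$\chi_1'$-terms'' $r\chi_1'|\nabla v|^2$ and $rv\chi_1'\xi\nabla v\cdot\nabla w$ remain. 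An analogous IBP in $I_3$ supplies a $\sigma\chi_2'|\nabla w|^2$ contribution alongside additional $\nabla u\cdot\nabla w$ and $\chi_1\chi_2\nabla v\cdot\nabla w$ pairings.

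At this point I apply \eqref{dchi1} in the form $r\int u^2 f\chi_1'|\nabla v|^2 \le -rK_1\int u^2 f\textsf{y}^2$ and $\sigma\int u^2 f\chi_2'|\nabla w|^2 \le -\sigma K_2\int u^2 f\textsf{z}^2$; combined with the $-2r\,\textsf{y}^2$ and $2\sigma\,\textsf{z}^2$ already supplied by $I_1$, these produce exactly the claimed coefficients $-r(r+K_1+2)$ and $-\sigma(\sigma+K_2-2)$, i.e.\ $-a_4$ and $-a_6$. Every remaining $\xi$-weighted integrand is reduced to a multiple of $\chi_2$ or $\chi_1\chi_2$ by means of \eqref{chi1} (used in the form $v\chi_i(v)\le b$ for some $b>0$) together with \eqref{xi} ($\xi\le\xi_0\chi_2$); this is how the $b\xi_0$ factors enter $a_3$ and $a_5$. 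The three bilinear products $\nabla u\cdot\nabla v$, $\nabla u\cdot\nabla w$, $\nabla v\cdot\nabla w$ are then passed to absolute value by Cauchy--Schwarz (using the elementary inequality $|4\sigma-2|\le 4\sigma+2$ to match the $\textsf{x}\textsf{z}$ coefficient $a_3$), and finally I split $-2\int f|\nabla u|^2 = -2\ep\int f|\nabla u|^2 -2(1-\ep)\int u^2 f\textsf{x}^2$ to separate the $-2\ep$-reserve from the $-a_1(\ep)\textsf{x}^2$ contribution.

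The step I expect to be the main obstacle is the absorption of the residual term $r\int u^2 f v\chi_1'(v)\xi\nabla v\cdot\nabla w$ left over from $I_2$, since this is the single place where the negative factor $\chi_1'$, the weight $v$, and the sensitivity $\xi$ interact simultaneously, and it has to be pushed entirely into the $\textsf{y}\textsf{z}$-coefficient. Handling it requires a pointwise estimate of the form $v|\chi_1'(v)|\,\xi(w)\le K_1 b\xi_0\chi_1(v)\chi_2(w)$, to be extracted from \eqref{chi1} (controlling $v\chi_1$) and \eqref{dchi1} (tying the ratio $|\chi_1'|/\chi_1^2$ to $K_1$ and forcing $s\chi_1(s)\le 1/K_1$ through integration of $(1/\chi_1)'\ge K_1$) for a suitably chosen constant $b>0$, together with \eqref{xi}. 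Once this estimate is available, it delivers exactly the missing $b\xi_0\,r\,K_1$ piece of $a_5(\xi_0)=2(r\sigma+r+\sigma)+b\xi_0 r(r+K_1)$, and collecting all the contributions produces \eqref{I_1+I_2+I_3}.
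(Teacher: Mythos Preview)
Your overall route matches the paper's: integrate by parts in each $I_j$, exploit $\nabla f=-f(r\chi_1(v)\nabla v+\sigma\chi_2(w)\nabla w)$, invoke \eqref{chi1}, \eqref{dchi1}, \eqref{xi} together with Cauchy--Schwarz, and collect the result as a quadratic form in $\textsf{x},\textsf{y},\textsf{z}$. Your expand-then-cancel treatment of $\nabla\cdot(v\xi\nabla w)$ inside $I_2$ is just alternate bookkeeping for integrating the full divergence by parts (this is how the paper proceeds), and your coefficient tracking for $a_1,a_2,a_4,a_6$ and the non-$\xi_0$ parts of $a_3,a_5$ is correct.

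The gap lies exactly where you locate it, and your proposed resolution does not close it. You want the pointwise estimate $v|\chi_1'(v)|\,\xi(w)\le bK_1\xi_0\,\chi_1(v)\chi_2(w)$, which after \eqref{xi} reduces to $v|\chi_1'(v)|\le bK_1\chi_1(v)$. But \eqref{dchi1} says $\chi_1'\le -K_1\chi_1^2$, i.e.\ it gives a \emph{lower} bound $|\chi_1'|\ge K_1\chi_1^2$; your phrase ``tying the ratio $|\chi_1'|/\chi_1^2$ to $K_1$'' conceals that the inequality runs the wrong way for what you need. The integration $(1/\chi_1)'\ge K_1$ you mention yields only $s\chi_1(s)\le 1/K_1$, which is an upper bound on $s\chi_1$ (already granted by \eqref{chi1}), not on $s|\chi_1'|$. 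Concretely, $\chi_1(s)=e^{-s}$ satisfies \eqref{chi1class}--\eqref{dchi1} with $K_1=1$, yet $s|\chi_1'(s)|/\chi_1(s)=s$ is unbounded, so no finite $b$ makes the desired pointwise bound true. The paper's proof treats the term $r\int_\Omega u^2vf\chi_1'(v)\xi(w)\nabla v\cdot\nabla w$ in the same way (bounding it by $c_1K_1\xi_0 r\int_\Omega u^2f\chi_1\chi_2|\nabla v||\nabla w|$) and with the same absence of justification, so you have faithfully reproduced its argument --- but the step is not supported by the stated hypotheses alone.
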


\begin{proof}
Integration by parts and the definition of $f$ yield
\begin{align}\label{I_1}
I_1
&=-2\int_\Omega \nabla (uf)\cdot\Big(\nabla u-u\chi_1(v)\nabla v+u\chi_2(w)\nabla w\Big)\notag\\
&=-2\int_\Omega f\nabla u\cdot\Big(\nabla u-u\chi_1(v)\nabla v+u\chi_2(w)\nabla w\Big)\notag\\
&\quad\,+2\int_\Omega uf\cdot\Big(r\chi_1(v)\nabla v+\sigma\chi_2(w)\nabla w\Big)\cdot 
                                              \Big(\nabla u-u\chi_1(v)\nabla v+u\chi_2(w)\nabla w\Big)\notag\\
&=-2\int_\Omega f|\nabla u|^2
    +2\int_\Omega uf\chi_1(v)\nabla u \cdot \nabla v
    -2\int_\Omega uf\chi_2(w)\nabla u \cdot \nabla w\notag\\
&\quad\,+2r\int_\Omega uf\chi_1(v)\nabla u \cdot \nabla v
    +2\sigma\int_\Omega uf\chi_2(w)\nabla u \cdot \nabla w\notag\\
&\quad\,-2r\int_\Omega u^2f|\chi_1(v)|^2|\nabla v|^2
    +2\sigma\int_\Omega u^2f|\chi_2(w)|^2|\nabla w|^2\notag\\
&\quad\,+2(r-\sigma)\int_\Omega u^2f\chi_1(v)\chi_2(w)\nabla v \cdot \nabla w,\notag\\
\intertext{
which along with the Cauchy--Schwarz inequality implies
}
I_1&\le -2\int_\Omega f|\nabla u|^2
    +2(r+1)\int_\Omega uf\chi_1(v)|\nabla u||\nabla v|
    +2(\sigma+1)\int_\Omega uf\chi_2(w)|\nabla u||\nabla w|\notag\\
&\quad\,-2r\int_\Omega u^2f|\chi_1(v)|^2|\nabla v|^2
    +2(r+\sigma)\int_\Omega u^2f\chi_1(v)\chi_2(w)|\nabla v||\nabla w|\notag\\
&\quad\,+2\sigma\int_\Omega u^2f|\chi_2(w)|^2|\nabla w|^2.
\end{align}
Also, again by the definition of $f$, we derive 
\begin{align*}
I_2
&=r\int_\Omega \nabla \Big(u^2f\chi_1(v)\Big) \cdot \Big(\nabla v+v\xi(w)\nabla w\Big)\\
&=2r\int_\Omega uf\chi_1(v)\nabla u \cdot \Big(\nabla v+v\xi(w)\nabla w\Big)\\
&\quad\,-r\int_\Omega u^2f\chi_1(v)\cdot\Big(r\chi_1(v)\nabla v+\sigma\chi_2(w)\nabla w\Big)\cdot  \Big(\nabla v+v\xi(w)\nabla w\Big)\\
&\quad\,+r\int_\Omega u^2f\chi_1'(v) \nabla v \cdot \Big(\nabla v+v\xi(w)\nabla w\Big)\\
&=2r\int_\Omega uf\chi_1(v)\nabla u \cdot \nabla v
    +2r\int_\Omega uvf\chi_1(v)\xi(w)\nabla u \cdot \nabla w\\
&\quad\,-r^2\int_\Omega u^2f|\chi_1(v)|^2|\nabla v|^2
    -r\sigma\int_\Omega u^2f\chi_1(v)\chi_2(w)\nabla v\cdot\nabla w\\
&\quad\,-r^2\int_\Omega u^2vf|\chi_1(v)|^2\xi(w)\nabla v \cdot \nabla w
    -r\sigma\int_\Omega u^2vf\chi_1(v)\chi_2(w)\xi(w)|\nabla w|^2\\
&\quad\,+r\int_\Omega u^2f\chi_1'(v)|\nabla v|^2
    +r\int_\Omega u^2vf\chi_1'(v)\xi(w)\nabla v \cdot \nabla w.
\end{align*}
Here, noting from \eqref{chi1} that $v\chi_1(v) \le c_1$ with some $c_1>0$ 
and using the Cauchy--Schwarz inequality, 
we obtain from the conditions \eqref{dchi1}, \eqref{xi} that
\begin{align}\label{I_2}
I_2
&\le 2r\int_\Omega uf\chi_1(v)|\nabla u||\nabla v|
    +2c_1\xi_0r\int_\Omega uf\chi_2(w)|\nabla u||\nabla w|\notag\\
&\quad\,-r^2\int_\Omega u^2f|\chi_1(v)|^2|\nabla v|^2
    +r\sigma\int_\Omega u^2f\chi_1(v)\chi_2(w)|\nabla v||\nabla w|\notag\\
&\quad\,+c_1\xi_0r^2\int_\Omega u^2f\chi_1(v)\chi_2(w)|\nabla v||\nabla w|\notag\\
&\quad\,-rK_1\int_\Omega u^2f|\chi_1(v)|^2|\nabla v|^2
    +c_1K_1\xi_0r\int_\Omega u^2f\chi_1(v)\chi_2(w)|\nabla v||\nabla w|\notag\\
&=2r\int_\Omega uf\chi_1(v)|\nabla u||\nabla v|
     -(r^2+rK_1)\int_\Omega u^2f|\chi_1(v)|^2|\nabla v|^2\notag\\
&\quad\,+r\sigma\int_\Omega u^2f\chi_1(v)\chi_2(w)|\nabla v||\nabla w|\notag\\
&\quad\,+2c_1\xi_0r\int_\Omega uf\chi_2(w)|\nabla u||\nabla w|
     +c_1\xi_0r(r+K_1)\int_\Omega u^2f\chi_1(v)\chi_2(w)|\nabla v||\nabla w|. 
\end{align}
Similarly, we have from the Cauchy--Schwarz inequality 
and the condition \eqref{dchi1} that 
\begin{align}\label{I_3}
I_3
&=\sigma\int_\Omega \nabla \Big(u^2f\chi_2(w)\Big) \cdot \nabla w\notag\\
&=2\sigma\int_\Omega uf\chi_2(w)\nabla u \cdot \nabla w\notag\\
     &\quad\,-\sigma\int_\Omega u^2f\chi_2(w)\cdot
                  \Big(r\chi_1(v)\nabla v+\sigma\chi_2(w)\nabla w\Big)\cdot \nabla w\notag\\
     &\quad\,+\sigma\int_\Omega u^2f\chi_2'(w) |\nabla w|^2\notag\\
&\le2\sigma\int_\Omega uf\chi_2(w)|\nabla u||\nabla w|\notag\\
     &\quad\,+r\sigma\int_\Omega u^2f\chi_1(v)\chi_2(w)|\nabla v||\nabla w|
                  -\sigma^2\int_\Omega u^2f|\chi_2(w)|^2|\nabla w|^2\notag\\
     &\quad\,-\sigma K_2\int_\Omega u^2f|\chi_2(w)|^2 |\nabla w|^2\notag\\
&=2\sigma\int_\Omega uf\chi_2(w)|\nabla u||\nabla w|\notag\\
     &\quad\,+r\sigma\int_\Omega u^2f\chi_1(v)\chi_2(w)|\nabla v||\nabla w|
                   -(\sigma^2+\sigma K_2)\int_\Omega u^2f|\chi_2(w)|^2|\nabla w|^2.
\end{align}
Hence combining \eqref{I_1}--\eqref{I_3} implies 
\begin{align*}
&I_1+I_2+I_3\\
&\quad\,\le -2\int_\Omega f|\nabla u|^2
       +a_2\int_\Omega uf\chi_1(v)|\nabla u||\nabla v|
       +a_3(\xi_0)\int_\Omega uf\chi_2(w)|\nabla u||\nabla w|\\
&\qquad\ -a_4\int_\Omega u^2f\chi_1^2(v)|\nabla v|^2
       +a_5(\xi_0)\int_\Omega u^2f\chi_1(v)\chi_2(w)|\nabla v||\nabla w|
       -a_6\int_\Omega u^2f\chi_2^2(w)|\nabla w|^2\\
&\quad\,=\int_\Omega u^2f \cdot(-2\textsf{x}^2+a_2\textsf{x}\textsf{y}+a_3(\xi_0)\textsf{x}\textsf{z}
                                                  -a_4\textsf{y}^2+a_5(\xi_0)\textsf{y}\textsf{z}-a_6\textsf{z}^2),
\end{align*}
where
\begin{align*}
a_2&:=2(2r+1),\\
a_3(\xi_0)&:=2(2\sigma+1+c_1\xi_0r),\\
a_4&:=r(r+K_1+2),\\
a_5(\xi_0)&:=2(r\sigma+r+\sigma)+c_1\xi_0r(r+K_1),\\
a_6&:=\sigma(\sigma+K_2-2),
\end{align*}
and 
\begin{align*}
	\textsf{x}:=u^{-1}|\nabla u|,\quad
	\textsf{y}:=\chi_1(v)|\nabla v|,\quad
	\textsf{z}:=\chi_2(w)|\nabla w|.
\end{align*}
Thus, noting that $-2=-2(1-\ep)-2\ep=:-a_1(\ep)-2\ep$ for $\ep \in [0,1)$, we infer \eqref{I_1+I_2+I_3}. 
\end{proof}

We finally derive $L^2$-boundedness of $u$ 
via proving that there exist $r, \sigma>0$ such that 
the quadratic form
 $-a_1(\ep)\textsf{x}^2+a_2\textsf{x}\textsf{y}+a_3(\xi_0)\textsf{x}\textsf{z}
-a_4\textsf{y}^2+a_5(\xi_0)\textsf{y}\textsf{z}-a_6\textsf{z}^2$ is negative.

\begin{lem}\label{Lem:uL2}
Assume that $\chi_1, \chi_2, \xi$ satisfy \eqref{chi1class}--\/\eqref{dxi} 
with $K_1, K_2$ fulfilling \eqref{condipara}. 
 Then there exist $r, \sigma>0$ such that 
 \begin{align}\label{uL2}
  \|u(\cdot,t)\|_{L^2(\Omega)} \le C
 \end{align}
 for all $t \in (0, \tmax)$ with some $C>0$. 
\end{lem}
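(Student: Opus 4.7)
The plan is to convert identity~\eqref{du2f} together with the pointwise estimate in Lemma~\ref{LemI1I2I3} into an autonomous differential inequality of the shape
\[
y'(t) \le c_1 y(t) - c_2 y(t)^{1+\theta}, \qquad y(t) := \int_\Omega u^2(\cdot,t)f(\cdot,t)\,dx,
\]
from which uniform boundedness of $y(t)$---and hence of $\|u(\cdot,t)\|_{L^2(\Omega)}$---will follow by a standard ODE comparison.

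The decisive first step is algebraic: to choose $r,\sigma>0$ so that the quadratic form
\[
Q(\textsf{x},\textsf{y},\textsf{z}) := -a_1(\ep)\textsf{x}^2 + a_2\textsf{x}\textsf{y} + a_3(\xi_0)\textsf{x}\textsf{z} - a_4\textsf{y}^2 + a_5(\xi_0)\textsf{y}\textsf{z} - a_6\textsf{z}^2
\]
is non-positive on $\{\textsf{x},\textsf{y},\textsf{z}\ge 0\}$. I would first set $\xi_0=\ep=0$ (so $a_1=2$), complete the square in $\textsf{x}$, and reduce $Q\le 0$ to the non-positivity of the induced binary form in $(\textsf{y},\textsf{z})$. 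A short computation gives $\textsf{y}^2$-coefficient $\frac{a_2^2}{4a_1}-a_4 = r^2-K_1 r+\frac{1}{2}$ (non-positive on an interval of $r$ iff $K_1\ge\sqrt{2}$) and $\textsf{z}^2$-coefficient $\sigma^2-(K_2-4)\sigma+\frac{1}{2}$, which is non-positive exactly for $\sigma$ in the interval $J$---non-empty precisely when $K_2>4+\sqrt{2}$, the second requirement in~\eqref{condipara}. Parametrising $\sigma=:\lambda\in J$ and imposing the discriminant inequality $a_5^2\le 4\bigl(a_4-\tfrac{a_2^2}{4a_1}\bigr)\bigl(a_6-\tfrac{a_3^2}{4a_1}\bigr)$ then yields a quadratic inequality in $r$, and the first line of~\eqref{condipara} is exactly the condition that this quadratic admits a solution for at least one $\lambda\in J$. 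A continuity argument in $\xi_0$ produces the threshold $\xi_0^*>0$ below which $Q\le 0$ persists with a uniform margin, and a sufficiently small $\ep>0$ is then chosen so that $a_1(\ep)>0$ and the bound is preserved.

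With such parameters fixed, Lemma~\ref{LemI1I2I3} yields $I_1+I_2+I_3\le -2\ep\int_\Omega f|\nabla u|^2$, together with unused slack in the $\textsf{y}^2$- and $\textsf{z}^2$-directions. For the residual terms in~\eqref{du2f} I would simply discard the manifestly non-positive cubic contributions $-r\alpha\int u^3 f\chi_1(v)$ and $-\sigma\gamma\int u^3 f\chi_2(w)$, and bound the remaining pieces by~\eqref{chi1}: $r\beta\int u^2 f\,v\chi_1(v)+\sigma\delta\int u^2 f\,w\chi_2(w)\le c_1\int u^2 f$. The resulting inequality
\[
y'(t)+2\ep\int_\Omega f|\nabla u|^2\,dx \le c_1 y(t)
\]
is then upgraded to the target ODI by the Gagliardo--Nirenberg inequality applied to $u\sqrt{f}$: mass conservation gives $\|u\sqrt{f}\|_{L^1(\Omega)}\le\|u\|_{L^1(\Omega)}$, while $\|\nabla(u\sqrt{f})\|_{L^2(\Omega)}^2$ is controlled by $\int f|\nabla u|^2$ together with $\textsf{y}^2,\textsf{z}^2$-type terms (absorbable into the unused margin of $Q$). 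Since $n\le 3$, this yields $y^{1+\theta}\le C\int_\Omega f|\nabla u|^2$ for some $\theta>0$, closing the differential inequality and hence giving $y\le C$.

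The main obstacle is the quadratic-form analysis: one must verify that the precise form of \eqref{condipara}---in particular the minimisation over $\lambda\in J$---indeed encodes solvability of the discriminant inequality, and that the choice of $\sigma=\lambda$ together with the resulting range for $r$ exhausts the available degrees of freedom. A secondary technical point is the passage from the weighted bound $y\le C$ to the unweighted estimate~\eqref{uL2}; here hypothesis~\eqref{dchi1} furnishes $\chi_i(s)\le(K_i s+1/\chi_i(0))^{-1}$ and hence a polynomial-in-$v,w$ lower bound for $f$, which can be combined with the $L^1$-control of $v,w$ from Lemma~\ref{L1vw} to convert the weighted integral into the desired $L^2$ bound.
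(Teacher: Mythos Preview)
Your plan coincides with the paper's: show the quadratic form $Q$ is negative definite (first at $\ep=\xi_0=0$, then by continuity for small $\ep>0$ and $\xi_0<\xi_0^*$), feed this into \eqref{du2f}, and close an ODI for $y=\int_\Omega u^2 f$ via Gagliardo--Nirenberg on $u\sqrt{f}$ together with mass conservation. The paper carries out the form analysis through the Sylvester criterion (principal minors $A_1,A_2$, citing \cite{CMY-2020}) rather than your completion of the square, and then defers the ODI closure and the passage to the unweighted bound entirely to \cite{CMY-2021}; your sketch makes both of these explicit, which is a plus.

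Two points deserve care. First, your identification ``$\sigma=:\lambda\in J$'' is off: the quadratic $\sigma^2-(K_2-4)\sigma+\tfrac12$ vanishes at $\tfrac12\bigl((K_2-4)\pm\sqrt{(K_2-4)^2-2}\,\bigr)$, so its zero set is $\tfrac12 J$, not $J$. The parameter $\lambda$ in \eqref{condipara} is therefore not literally $\sigma$; this does not damage the method, but the specific discriminant computation you allude to will not reproduce \eqref{condipara} unless the parametrisation is corrected. Second, and more substantively, your final step---converting $\int_\Omega u^2 f\le C$ into the unweighted bound \eqref{uL2} using only the polynomial lower bound $f\ge c(1+v)^{-r/K_1}(1+w)^{-\sigma/K_2}$ and the $L^1$ control from Lemma~\ref{L1vw}---is not complete as stated: an $L^1$ bound on $v,w$ does not by itself control $\int_\Omega f^{-1}$ (the exponents $r/K_1,\sigma/K_2$ need not be $\le 1$), so a direct H\"older argument may fail. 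The actual mechanism in \cite{CMY-2021} is what the paper relies on here; you should either invoke it or supply a genuine bootstrap.
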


\begin{proof}
Let us show that $-a_1(\ep)\textsf{x}^2+a_2\textsf{x}\textsf{y}+a_3(\xi_0)\textsf{x}\textsf{z}
-a_4\textsf{y}^2+a_5(\xi_0)\textsf{y}\textsf{z}-a_6\textsf{z}^2 < 0$ with some $r, \sigma>0$ in \eqref{I_1+I_2+I_3}. 
We set
\begin{align*}
	A_1(\ep,\xi_0):=\left|
	\begin{array}{cc}
		-a_1(\ep) & \frac{a_3(\xi_0)}{2}
		\\
		\frac{a_3(\xi_0)}{2} & -a_6
	\end{array}
	\right|
	\quad
	{\rm and}
	\quad
	A_2(\ep,\xi_0):=\left|
	\begin{array}{ccc}
		-a_1(\ep) & \frac{a_3(\xi_0)}{2} & \frac{a_2}{2}
		\\
		\frac{a_3(\xi_0)}{2} &            -a_6 & \frac{a_5(\xi_0)}{2}
		\\
		\frac{a_2}{2} & \frac{a_5(\xi_0)}{2} &            -a_4
	\end{array}
	\right|
\end{align*}
for $\ep \in [0, 1)$ and $\xi_0>0$. 
By the proof of \cite[Lemma~3.3]{CMY-2020}, 
we obtain that there exist $r, \sigma>0$ such that 
$A_1(0,0)>0$ and $A_2(0,0)<0$. 
Thus, noting that the functions
$a_1 \colon \ep \mapsto 2(1-\ep)$,  
$a_3 \colon \xi_0 \mapsto 2(2\sigma+1+b\xi_0r)$, 
$a_5 \colon \xi_0 \mapsto 2(r\sigma+r+\sigma)+b\xi_0r(r+K_1)$, 
where $b>0$ is obtained in Lemma~\ref{LemI1I2I3}, 
are continuous at $\ep=0$, $\xi_0=0$, 
we can find $\ep_0 \in (0,1)$ and $\xi_0^*>0$ 
such that $A_1(\ep,\xi_0)>0$ and $A_2(\ep,\xi_0)<0$ for all $\ep \in (0,\ep_0)$ 
and all $\xi_0 \in (0, \xi_0^*)$. 
Hence in light of these estimates and the Sylvester criterion (see e.g.\ \cite[Lemma~2.4]{CMY-2020}), 
we see that
\begin{align*}
	-a_1(\ep)\textsf{x}^2+a_2\textsf{x}\textsf{y}+a_3(\xi_0)\textsf{x}\textsf{z}
	-a_4\textsf{y}^2+a_5(\xi_0)\textsf{y}\textsf{z}-a_6\textsf{z}^2 < 0, 
\end{align*}
which along with \eqref{I_1+I_2+I_3} and \eqref{du2f} implies that 
\begin{align*}
	\frac{d}{dt}\int_\Omega u^2f
	+2\ep\int_\Omega f|\nabla u|^2
	&\le
	-r\int_\Omega u^2f\chi_1(v)(\alpha u-\beta v)
	-\sigma\int_\Omega u^2f\chi_2(w)(\gamma u-\delta w).
\end{align*}
Therefore, proceeding as in \cite[the proof of Lemma~3.5]{CMY-2021} with $p=2$, $\chi=\chi_1$, $\xi=\chi_2$, 
we can arrive at the conclusion of this lemma. 
\end{proof}

\section{$L^2$-boundedness of $\nabla v$} \label{Sec4}

Throughout the sequel, we assume that $\chi_1, \chi_2$ satisfy \eqref{chi1class}--\eqref{dchi1} with $K_1, K_2$ fulfilling 
\eqref{condipara}
and $\xi$ satisfies \eqref{xiclass}, \eqref{xi} with some
$\xi_0 \in (0, \xi_0^*)$ as well as \eqref{dxi}. 
\medskip

To derive $L^r$-boundedness of $\nabla v$ with some $r>n$
we will obtain $L^2$-boundedness of $\nabla v$ 
and then upgrade it. 
We first prove the following lemma which will be used later. 

\begin{lem}
There exists $q \in (n, \frac{2n}{(n-2)_+})$ such that 
\begin{align}\label{wLq}
	\|\nabla w(\cdot,t)\|_{L^q(\Omega)} \le C_1
\end{align}
 for all $t \in (0, \tmax)$ with some $C_1>0$. 
 Moreover, 
\begin{align}\label{dwL2}
 \int_t^{t+\tau}\int_\Omega |\Delta w(\cdot,t)|^2 \le C_2
\end{align}
for all $t \in (0, \tmax-\tau)$ with some $C_2>0$, 
where $\tau:=\min\{1, \frac{1}{2}\tmax\}$. 
\end{lem}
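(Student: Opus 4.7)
The plan is to exploit the $L^2$-boundedness of $u$ from Lemma~\ref{Lem:uL2} and analyse the third equation $w_t=\Delta w+\gamma u-\delta w$ with Neumann boundary condition by two essentially independent arguments. Note first that the dimension assumption $n\le3$ is precisely what guarantees the interval $(n,\tfrac{2n}{(n-2)_+})$ to be nonempty, so the choice of $q$ in the statement is legitimate.

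To establish \eqref{wLq}, I would write $w$ through the variation-of-constants formula,
\[
w(\cdot,t)=e^{t(\Delta-\delta)}w_0+\gamma\int_0^t e^{(t-s)(\Delta-\delta)}u(\cdot,s)\,ds,
\]
and apply the standard Neumann-heat-semigroup smoothing estimate
\[
\bigl\|\nabla e^{t(\Delta-\delta)}\varphi\bigr\|_{L^q(\Omega)}
\le C\bigl(1+t^{-\frac12-\frac{n}{2}(\frac1p-\frac1q)}\bigr)e^{-\lambda t}\|\varphi\|_{L^p(\Omega)},\qquad \lambda>0,
\]
with $p=2$. Integrability of the resulting singular kernel in time requires $\tfrac12+\tfrac{n}{2}(\tfrac12-\tfrac1q)<1$, which is equivalent to $q<\tfrac{2n}{(n-2)_+}$; for $n\le3$ this is compatible with $q>n$, so a suitable $q$ can be fixed. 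Combining this with $\|u(\cdot,s)\|_{L^2(\Omega)}\le C$ from Lemma~\ref{Lem:uL2} and with the homogeneous contribution controlled by $w_0\in W^{1,\infty}(\Omega)\hookrightarrow W^{1,q}(\Omega)$ yields \eqref{wLq}.

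To obtain \eqref{dwL2}, I would test the third equation in \eqref{ARC} with $-\Delta w$. Integration by parts using the Neumann boundary condition produces
\[
\frac{1}{2}\frac{d}{dt}\int_\Omega|\nabla w|^2+\int_\Omega|\Delta w|^2+\delta\int_\Omega|\nabla w|^2
=-\gamma\int_\Omega u\,\Delta w.
\]
Applying Young's inequality,
\[
-\gamma\int_\Omega u\,\Delta w\le\frac{1}{2}\int_\Omega|\Delta w|^2+\frac{\gamma^2}{2}\int_\Omega u^2,
\]
and invoking Lemma~\ref{Lem:uL2} once more, the right-hand side becomes bounded by a constant. Since \eqref{wLq} (or, equivalently, the $p=q=2$ version of the semigroup argument above) already provides $\|\nabla w(\cdot,t)\|_{L^2(\Omega)}\le C$ uniformly in $t\in(0,\tmax)$, a time-integration of the resulting inequality over $(t,t+\tau)$ produces \eqref{dwL2}.

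The main obstacle, if any, is the bookkeeping in the semigroup estimate for \eqref{wLq}: one needs to verify that the convolution $\int_0^t(1+(t-s)^{-\frac12-\frac{n}{2}(\frac12-\frac1q)})e^{-\lambda(t-s)}\,ds$ is bounded uniformly in $t\in(0,\tmax)$, which hinges on the dimensional restriction $n\le3$ that makes the singularity integrable for some admissible $q>n$. Once \eqref{wLq} is in hand, the space-time estimate \eqref{dwL2} follows by a routine energy argument.
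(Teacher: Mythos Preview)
Your proposal is correct and follows essentially the same route as the paper: the bound \eqref{wLq} is obtained via the Duhamel representation and Neumann heat-semigroup smoothing estimates applied to the $L^2$-bound on $u$ from Lemma~\ref{Lem:uL2}, and \eqref{dwL2} is derived by testing the $w$-equation against $-\Delta w$, absorbing $-\gamma\int_\Omega u\,\Delta w$ via Young's inequality, and integrating the resulting differential inequality over $(t,t+\tau)$ using the uniform $L^2$-bound on $\nabla w$. The paper proceeds identically, only citing \cite[Lemma~1.3]{W-2010-1} in place of writing out the semigroup estimate explicitly.
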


\begin{proof}
We can obtain $L^q$-boundedness \eqref{wLq} by the estimate \eqref{uL2}
and semigroup estimates (see e.g.\ \cite[Lemma~1.3]{W-2010-1}). 
Thus we concentrate on the proof of \eqref{dwL2}. 
Integration by parts, 
the third equation in \eqref{ARC} and the Young inequality derive that 
for all $t \in (0, \tmax)$,
\begin{align*}
	      \frac{1}{2}\frac{d}{dt}\int_\Omega |\nabla w|^2
	&= -\int_\Omega \Delta w \cdot w_t
\\
    &= -\int_\Omega \Delta w \cdot (\Delta w+\gamma u-\delta w)\
\\
	&= -\int_\Omega |\Delta w|^2
	      -\gamma\int_\Omega u\Delta w
	      -\delta\int_\Omega |\nabla w|^2
\\
	&\le -\frac{1}{2}\int_\Omega |\Delta w|^2
	       -\delta\int_\Omega|\nabla w|^2
	       +\frac{\gamma^2}{2}\int_\Omega u^2.
\end{align*}
This along with \eqref{uL2} yields
\begin{align}\label{Diffnabw}
	     \frac{d}{dt}\int_\Omega |\nabla w|^2
	     +2\delta\int_\Omega|\nabla w|^2
	     +\int_\Omega |\Delta w|^2
	\le c_1
\end{align}
for all $t \in (0, \tmax)$ with some $c_1>0$, which implies
\begin{align}\label{dwL21}
	\|\nabla w(\cdot,t)\|_{L^2(\Omega)} \le c_2
\end{align}
for all $t \in (0, \tmax)$ with some $c_2>0$. 
Thus, by integrating \eqref{Diffnabw} over $[t,t+\tau]$ for $t \in (0,\tmax-\tau)$, 
we see from \eqref{dwL21} that 
\begin{align*}
		 \int_\Omega |\nabla w(\cdot,t+\tau)|^2
		 +2\delta\int_t^{t+\tau}\int_\Omega|\nabla w|^2
		 +\int_t^{t+\tau}\int_\Omega |\Delta w|^2
	&\le c_1\tau+\int_\Omega |\nabla w(\cdot,t)|^2
	\\
	&\le c_1\tau+c_2^2
\end{align*}
for all $t \in (0,\tmax-\tau)$, 
which in conjunction with the nonnegativity of 
$|\nabla w|^2$ asserts that \eqref{dwL2} holds. 
\end{proof}

The next lemma can be proved by arguments based on semigroup estimates 
(see e.g.\ \cite[Proof of Lemma~3.2]{BBTW-2015}, \cite[Proof of Lemma~2.4]{M-2018}). 
Thus we shall only show brief proof. 

\begin{lem}\label{Lem:vLinf}
There exists $C>0$ such that
\begin{align}\label{Linfestv}
\|v(\cdot,t)\|_{L^\infty(\Omega)} \le C
\end{align}
for all $t \in (0, \tmax)$. 
\end{lem}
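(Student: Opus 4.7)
The plan is to invoke the standard Neumann heat semigroup machinery on the Duhamel representation of $v$ and to bootstrap in the spatial integrability exponent. Rewriting the second equation of \eqref{ARC} as $v_t = (\Delta - \beta)v + \alpha u - \nabla \cdot (v\xi(w)\nabla w)$, the variation-of-constants formula gives
\begin{align*}
v(\cdot, t) = e^{t(\Delta - \beta)}v_0 + \alpha \int_0^t e^{(t-s)(\Delta - \beta)}u(\cdot,s)\,ds - \int_0^t e^{(t-s)(\Delta - \beta)}\nabla \cdot \bigl(v\xi(w)\nabla w\bigr)(\cdot,s)\,ds
\end{align*}
for $t \in (0,\tmax)$. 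A preliminary ingredient is the boundedness of $\xi(w)$: the estimate \eqref{wLq} with $q > n$, combined with the $L^1$-bound from Lemma~\ref{L1vw} and the Sobolev embedding $W^{1,q}(\Omega) \hookrightarrow L^\infty(\Omega)$, yields $\|w(\cdot, t)\|_{L^\infty(\Omega)} \le c_1$, so continuity of $\xi$ on $[0,c_1]$ gives $\|\xi(w(\cdot, t))\|_{L^\infty(\Omega)} \le c_2$ uniformly in $t$.

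The three summands of the Duhamel formula are then estimated separately in $L^\infty(\Omega)$. The initial contribution is dominated by $\|v_0\|_{L^\infty(\Omega)}$, which is finite thanks to \eqref{KF2}. For the $u$-contribution, I would apply the standard smoothing estimate $\|e^{t(\Delta - \beta)}\varphi\|_{L^\infty(\Omega)} \le c_3(1 + t^{-n/4})e^{-\beta t}\|\varphi\|_{L^2(\Omega)}$ together with Lemma~\ref{Lem:uL2}; since $n \le 3$ makes $t^{-n/4}$ integrable at zero, this yields a uniform-in-$t$ bound on this piece. The real work therefore lies in the convective term.

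The convective term is controlled by a finite bootstrap. Combining the gradient semigroup estimate
\begin{align*}
\|e^{t(\Delta - \beta)}\nabla \cdot \varphi\|_{L^{p'}(\Omega)} \le c_4\bigl(1 + t^{-\frac{1}{2} - \frac{n}{2}(\frac{1}{m} - \frac{1}{p'})}\bigr)e^{-\beta t}\|\varphi\|_{L^m(\Omega)}
\end{align*}
with the H\"older bound $\|v\xi(w)\nabla w\|_{L^m(\Omega)} \le c_2 \|v\|_{L^{p}(\Omega)}\|\nabla w\|_{L^q(\Omega)}$, where $1/m = 1/p + 1/q$, and starting from $L^1$-boundedness of $v$ supplied by Lemma~\ref{L1vw}, I would iteratively upgrade to $L^{p_{k+1}}$-boundedness with a step relation of the form $1/p_{k+1} < 1/p_k - (1/n - 1/q)$. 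Because $q > n$ the gain $1/n - 1/q$ is strictly positive, so finitely many iterations push $p_k$ to $\infty$. The main delicate point is precisely this: at every step the time-singular exponent $1/2 + (n/2)(1/m - 1/p')$ must remain strictly below $1$, which forces the strict inequality $q > n$ from \eqref{wLq}. Once the bootstrap terminates, collecting the three estimates yields the desired $L^\infty$-bound \eqref{Linfestv}.
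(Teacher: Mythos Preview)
Your overall plan---Duhamel representation plus Neumann heat-semigroup smoothing---matches the paper's, and your handling of the initial term, the $u$-term, and the boundedness of $\xi(w)$ is fine (the paper simply invokes $\xi(s)\le\xi_0\chi_2(0)$ from \eqref{dchi1} and \eqref{xi}, but your route via $W^{1,q}\hookrightarrow L^\infty$ for $w$ works as well).

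The convective term, however, contains a genuine gap. Your H\"older decomposition gives $1/m=1/p+1/q$; starting the bootstrap from $p=p_0=1$ forces $m=q/(q+1)<1$, and the gradient semigroup estimate you quote is only available for $1\le m\le p'$ (cf.\ \cite[Lemma~1.3\,(iv)]{W-2010-1}). With merely $v\in L^1$ and $\nabla w\in L^q$, the product $v\xi(w)\nabla w$ cannot be placed in any $L^m$ with $m\ge1$, so the very first iteration step is unjustified. (There is also a sign slip in the recursion: integrability of the time singularity forces $1/p_{k+1}>1/p_k-(1/n-1/q)$, not $<$, so the gain per step is strictly smaller than $1/n-1/q$, though still positive.)

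The paper sidesteps this by replacing your finite bootstrap with a single self-referencing estimate. Writing the Duhamel formula from $t_0:=(t-1)_+$ rather than from $0$ and interpolating $\|v\|_{L^p}\le\|v\|_{L^1}^{1/p}\|v\|_{L^\infty}^{1-1/p}$ with $p$ large enough that $m:=pq/(p+q)>n$ (possible since $q>n$), one keeps $m\ge1$ and arrives at
\[
\sup_{t\in(0,T)}\|v(\cdot,t)\|_{L^\infty(\Omega)}\le c\Bigl(\sup_{t\in(0,T)}\|v(\cdot,t)\|_{L^\infty(\Omega)}\Bigr)^{\theta}+c
\]
for some $\theta\in(0,1)$, from which the uniform $L^\infty$-bound follows directly. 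Your scheme can be repaired by inserting precisely this interpolation at the first step, but as written the iteration cannot leave $L^1$.
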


\begin{proof}
It is sufficient to prove that there is $c_1>0$ such that 
$\sup_{t \in (0,T)} \|v(\cdot,t)\|_{L^\infty(\Omega)} \le c_1$ 
for all $T \in (0,\tmax)$. 
To this end we put $t_0:=(t-1)_+$ for $t \in (0,T)$ and 
let the symbol $(e^{t\Delta})_{t\ge0}$ denote 
the Neumann heat semigroup in $\Omega$. 
Then we can rewrite the second equation in \eqref{ARC} by the Duhamel formula as
\begin{align*}
	v(\cdot,t)
	&= e^{(t-t_0)(\Delta-\beta)}v(\cdot,t_0)
	-\int_{t_0}^t e^{(t-s)(\Delta-\beta)}
	\nabla \cdot \Big(v(\cdot,s)\xi(w(\cdot,s))\nabla w(\cdot,s)\Big)\,ds\\
	&\quad\,
	+\alpha \int_{t_0}^t e^{(t-s)(\Delta-\beta)}u(\cdot,s)\,ds\\
	&=: J_1(\cdot,t)+J_2(\cdot,t)+J_3(\cdot,t)
\end{align*}
for all $t \in (0, T)$. 
We can estimate $J_1, J_3$ by constants 
via semigroup arguments. 
Also, by virtue of the fact $\xi(s) \le \xi(0)$ from the conditions 
\eqref{dchi1}, \eqref{xi}, 
and the estimate \eqref{wLq}, the term $J_2$ can be estimated as 
$\|J_2(\cdot,t)\|_{L^\infty(\Omega)} 
\le c_2\sup_{t \in (0,T)} \|v(\cdot,t)\|_{L^\infty(\Omega)}^{\theta}$ 
with some $c_2>0$ and $\theta \in (0,1)$. 
Thus we have
\begin{align*}
\sup_{t \in (0,T)} \|v(\cdot,t)\|_{L^\infty(\Omega)} 
\le c_2\sup_{t \in (0,T)} \|v(\cdot,t)\|_{L^\infty(\Omega)}^{\theta}+c_3
\end{align*} 
with some $c_3>0$, 
which yields the conclusion of this lemma. 
\end{proof}

We are now in the position to derive $L^2$-boundedness of $\nabla v$. 

\begin{lem}\label{Lem:dvL2}
There exists $C>0$ such that
\begin{align}\label{dvL2est}
\|\nabla v(\cdot,t)\|_{L^2(\Omega)} \le C
\end{align}
for all $t \in (0, \tmax)$. 
\end{lem}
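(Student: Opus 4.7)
The plan is to derive $L^2$-boundedness of $\nabla v$ by an $H^1$-type energy estimate. Multiplying the second equation in \eqref{ARC} by $-\Delta v$, integrating over $\Omega$ and using the Neumann boundary condition yields
\begin{align*}
\frac{1}{2}\frac{d}{dt}\int_\Omega|\nabla v|^2 + \int_\Omega|\Delta v|^2 + \beta\int_\Omega|\nabla v|^2
= -\int_\Omega \Delta v\, \nabla\cdot(v\xi(w)\nabla w) - \alpha\int_\Omega u\,\Delta v.
\end{align*}
After expanding $\nabla\cdot(v\xi(w)\nabla w) = \xi(w)\nabla v\cdot\nabla w + v\xi'(w)|\nabla w|^2 + v\xi(w)\Delta w$, I would apply Young's inequality with small coefficients to absorb a fraction of $\int|\Delta v|^2$ into the left-hand side, producing an inequality of the form
\begin{align*}
\frac{d}{dt}\int_\Omega|\nabla v|^2 + \tfrac{1}{2}\int_\Omega|\Delta v|^2 + 2\beta\int_\Omega|\nabla v|^2
\le C\Big(\int_\Omega u^2 + \int_\Omega|\Delta w|^2 + \int_\Omega|\nabla w|^4 + \int_\Omega |\nabla v|^2|\nabla w|^2\Big),
\end{align*}
where the prefactors absorb the already-known bounds for $\|v\|_{L^\infty(\Omega)}$ from Lemma~\ref{Lem:vLinf}, for $\xi(w)$ from the monotonicity of $\chi_2$ together with \eqref{xi}, and for $|\xi'(w)|$ by continuity of $\xi'$ on the range of $w$, which is bounded via the Sobolev embedding $W^{1,q}(\Omega)\hookrightarrow L^\infty(\Omega)$ for $q>n$ combined with \eqref{wLq}.

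The three standard terms are then routine: $\int u^2$ is controlled by Lemma~\ref{Lem:uL2}; $\int|\nabla w|^4$ is bounded by \eqref{wLq} (choosing the free parameter $q\geq 4$, which is compatible with $q\in(n,2n/(n-2)_+)$ in every dimension $n\le 3$); and $\int|\Delta w|^2$ is controlled only after time integration via \eqref{dwL2}. The delicate term is $\int|\nabla v|^2|\nabla w|^2$, which I would estimate via H\"older's inequality as
\begin{align*}
\int_\Omega|\nabla v|^2|\nabla w|^2 \le \|\nabla v\|_{L^{2q/(q-2)}(\Omega)}^2\,\|\nabla w\|_{L^q(\Omega)}^2,
\end{align*}
and then apply the Gagliardo--Nirenberg inequality together with Lemma~\ref{Lem:vLinf}:
\begin{align*}
\|\nabla v\|_{L^{2q/(q-2)}(\Omega)} \le C\bigl(\|\Delta v\|_{L^2(\Omega)}^a \|v\|_{L^\infty(\Omega)}^{1-a} + \|v\|_{L^\infty(\Omega)}\bigr),
\end{align*}
with $a\in(0,1)$ for every admissible $q$ in dimensions $n\le 3$. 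Squaring and using Young's inequality yields at most $\varepsilon\|\Delta v\|_{L^2(\Omega)}^2 + C_\varepsilon$, which is absorbed by the $\tfrac{1}{2}\int|\Delta v|^2$ on the left-hand side.

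Putting everything together, an estimate of the form
\begin{align*}
\frac{d}{dt}\int_\Omega|\nabla v|^2 + 2\beta\int_\Omega|\nabla v|^2 \le C_1 + C_2\int_\Omega|\Delta w|^2
\end{align*}
emerges, whose right-hand side is controlled on every interval of length $\tau$ by \eqref{dwL2}. A standard uniform-in-time argument (splitting $(0,\tmax)$ into intervals of length $\tau$ and combining an ODE comparison with the integrated bound) then yields \eqref{dvL2est}. I expect the main obstacle to be the cross term $\int|\nabla v|^2|\nabla w|^2$: it forces the use of second-order information on $v$ and hence the somewhat technical Gagliardo--Nirenberg interpolation, where one must verify that the exponent $a$ is strictly less than $1$ so that the resulting power of $\|\Delta v\|_{L^2(\Omega)}$ can be absorbed on the left.
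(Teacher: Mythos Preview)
Your proposal is correct and follows essentially the same approach as the paper: an $H^1$-energy identity obtained by testing the $v$-equation with $-\Delta v$, Young's inequality to isolate $\int|\Delta v|^2$, a Gagliardo--Nirenberg-type interpolation (the paper interpolates $\|\nabla v\|_{L^{3+\eta}}$ between $W^{2,2}$ and $L^q$ after an extra Young step, whereas you interpolate $\|\nabla v\|_{L^{2q/(q-2)}}$ between $\|\Delta v\|_{L^2}$ and $\|v\|_{L^\infty}$ after a H\"older step---these are equivalent manoeuvres), and the final ODE argument using the time-averaged bound \eqref{dwL2}. The only cosmetic difference is your treatment of $|\xi'(w)|$ via $w\in L^\infty$ (which does require combining \eqref{wLq} with Lemma~\ref{L1vw} to get the full $W^{1,q}$-bound before invoking Sobolev), while the paper cites \eqref{dxi} directly.
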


\begin{proof}
Integration by parts, 
the second equation in \eqref{ARC} 
and the Cauchy--Schwarz inequality imply that
\begin{align*}
	\frac{1}{2}\frac{d}{dt}\int_\Omega |\nabla v|^2
	&= -\int_\Omega \Delta v \cdot v_t
	\\
	&= -\int_\Omega \Delta v \cdot 
	               \Big(\Delta v+\nabla\cdot (v\xi(w)\nabla w)+\alpha u-\beta v\Big)
	\\
	&= -\int_\Omega |\Delta v|^2
	-\int_\Omega \Delta v \cdot 
	   \Big(\xi(w)\nabla v \cdot \nabla w+v\xi'(w)|\nabla w|^2+v\xi(w)\Delta w\Big)
    \\
    &\quad\,
          +\alpha\int_\Omega \nabla u \cdot \nabla v
          -\beta\int_\Omega |\nabla v|^2
	\\
	&\le -\int_\Omega |\Delta v|^2
	+\int_\Omega \xi(w)|\Delta v||\nabla v||\nabla w|
	+\int_\Omega v|\xi'(w)||\Delta v||\nabla w|^2
	\\
	&\quad\,
	+\int_\Omega v\xi(w)|\Delta v||\Delta w|
	-\alpha\int_\Omega u\Delta v
	-\beta\int_\Omega |\nabla v|^2.
\end{align*}
Here, noting from \eqref{dchi1}, \eqref{xi} that 
$\xi(s) \le \xi(0)$, we see from \eqref{dxi}, \eqref{Linfestv} that 
\begin{align*}
	\frac{1}{2}\frac{d}{dt}\int_\Omega |\nabla v|^2
		&\le -\int_\Omega |\Delta v|^2
	+c_1\int_\Omega |\Delta v||\nabla v||\nabla w|
	+c_1\int_\Omega |\Delta v||\nabla w|^2 
	\notag\\
	&\quad\,
	+c_1\int_\Omega |\Delta v||\Delta w|
	-\alpha\int_\Omega u\Delta v
	-\beta\int_\Omega |\nabla v|^2
\end{align*}
with some $c_1>0$. 
We now take $\ep_1>0$ sufficiently small and let $\eta>0$ which will be fixed later. 
Applying the Young inequality to 
the second, third, fourth and fifth terms on the right-hand side 
of this inequality, we have
\begin{align*}
	\frac{1}{2}\frac{d}{dt}\int_\Omega |\nabla v|^2
	&\le -\ep_1\int_\Omega |\Delta v|^2
	+c_2\int_\Omega |\nabla v|^{3+\eta}
	+c_2\int_\Omega |\nabla w|^{6-\frac{2\eta}{1+\eta}}
\\
&\quad\,
	+c_2\int_\Omega |\nabla w|^4
	+c_2\int_\Omega |\Delta w|^2
	+c_2\int_\Omega u^2
	-\beta\int_\Omega |\nabla v|^2
\end{align*}
with some $c_2>0$, 
which along with \eqref{uL2} and \eqref{wLq} yields
\begin{align}\label{ddvL3}
		\frac{1}{2}\frac{d}{dt}\int_\Omega |\nabla v|^2
	&\le -\ep_1\int_\Omega |\Delta v|^2
	+c_2\int_\Omega |\nabla v|^{3+\eta}
	+c_2\int_\Omega |\Delta w|^2
	-\beta\int_\Omega |\nabla v|^2
	+c_3
\end{align}
with some $c_3>0$. 
We now pick $q \in (n,6)$. 
Then we can verify that there exists $\eta>0$ such that 
\begin{align*}
	a_\eta := \frac{1-\frac{n}{3+\eta}+\frac{n}{q}}{2-\frac{n}{2}+\frac{n}{q}} 
	          \in \Big(\frac{1}{2}, 1\Big),
	\qquad
	(3+\eta)a_\eta<2.
\end{align*}
Hence, employing \cite[p.\ 242, Lemma~4.1]{P-1983} and 
standard elliptic regularity theory implies
\begin{align*}
  \|\nabla v(\cdot,t)\|_{L^{3+\eta}(\Omega)}^{3+\eta}
  &\le c_4\|v(\cdot,t)\|_{W^{2,2}(\Omega)}^{(3+\eta)a_\eta}
            \|v(\cdot,t)\|_{L^q(\Omega)}^{(3+\eta)(1-a_\eta)}\\
  &\le c_5\|\Delta v(\cdot,t)\|_{L^2(\Omega)}^{(3+\eta)a_\eta}
            \|v(\cdot,t)\|_{L^q(\Omega)}^{(3+\eta)(1-a_\eta)}
\end{align*}
with some $c_4, c_5>0$, which in conjunction with 
the Young inequality, the estimate \eqref{Linfestv} and 
the fact $(3+\eta)a_\eta<2$ asserts
\begin{align*}
	\|\nabla v(\cdot,t)\|_{L^{3+\eta}(\Omega)}^{3+\eta}
	\le \ep_1\|\Delta v(\cdot,t)\|_{L^2(\Omega)}^2+c_6
\end{align*}
with some $c_6>0$. 
Combining this estimate with \eqref{ddvL3} yields 
\begin{align*}
	\frac{1}{2}\frac{d}{dt}\int_\Omega |\nabla v|^2
	+\beta\int_\Omega |\nabla v|^2
	&\le 
    c_2\int_\Omega |\Delta w|^2
	+c_7
\end{align*}
with some $c_7>0$. 
Thus, thanks to \eqref{dwL2}, we arrive at the conclusion of this lemma. 
\end{proof}

\section{$L^r$-boundedness of $\nabla v$ with some $r>n$} \label{Sec5} 
In this section we will upgrade $L^2$-boundedness of $\nabla v$ to 
$L^r$-boundedness of $\nabla v$ with some $r>n$. 
To see this we take $\sigma_1, \sigma_2, \sigma_3>0$ fulfilling 
$0<\sigma_1<\sigma_2<\sigma_3<\tmax$ and put 
\begin{align*}
	M_r(T):=\sup_{t \in (\sigma_3, T)}\|\nabla v(\cdot,t)\|_{L^r(\Omega)}
\end{align*}
for $T>\sigma_3$. 
Also, we let the symbol $A$ denote the realization of 
the operator $-\Delta+\delta$
under homogeneous Neumann boundary condition in $L^2(\Omega)$. 
We then note that $A$ is sectorial and thus possesses closed fractional 
powers $A^\rho$ for arbitrary $\rho>0$, 
and the corresponding domains $D(A^\rho)$ 
have the embedding property, that is, 
$D(A^\rho) \hookrightarrow W^{2,2}(\Omega)$ 
when $2-\frac{n}{2}<2\rho-\frac{n}{2}$ (see e.g.\ \cite[Lemma~4.1]{LM-2017-2}). 
We first prove some estimate for $u$.

\begin{lem}\label{Lem:AzL2}
There exist $\rho \in (0,\frac12)$, $a \in (0,1)$ and $C>0$ 
such that for all $T \in (\sigma_3, \tmax)$, 
\begin{align}\label{AzL2}
\|A^\rho u(\cdot,t)\|_{L^2(\Omega)} 
\le C\Big(1+M_r^a(T)\Big)
\end{align}
for all $t \in (\sigma_1, T)$. 
\end{lem}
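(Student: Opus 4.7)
The plan is to apply the variation-of-constants formula to the $u$-equation in \eqref{ARC}, viewed against the sectorial operator $A$, and to read off $\|A^\rho u\|_{L^2}$ from the smoothing properties of $(e^{-tA})_{t\ge 0}$. Writing the first equation as $u_t + Au = \delta u + \nabla \cdot F$ with $F := -u\chi_1(v)\nabla v + u\chi_2(w)\nabla w$, and fixing $t_0 := \sigma_1/2$, the Duhamel representation
\[
A^\rho u(\cdot, t) = A^\rho e^{-(t-t_0)A}u(\cdot, t_0) + \delta\int_{t_0}^t A^\rho e^{-(t-s)A} u(\cdot, s)\,ds + \int_{t_0}^t A^\rho e^{-(t-s)A} \nabla\cdot F(\cdot, s)\,ds
\]
holds for every $t \in (\sigma_1, T)$. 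The first two summands are bounded by absolute constants: the standard inequality $\|A^\rho e^{-\tau A}g\|_{L^2}\le C\tau^{-\rho}e^{-\mu\tau}\|g\|_{L^2}$, together with $t - t_0 \ge \sigma_1/2 > 0$ for the first, the integrability of $\tau^{-\rho} e^{-\mu \tau}$ on $(0,\infty)$ for $\rho < 1$ for the second, and the $L^2$-bound on $u$ from Lemma~\ref{Lem:uL2}, settles them both.

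The main term is the last one. Pick an intermediate exponent $m \in (n, r)$, set $p := 2m/(m+2) \in (1, 2)$ so that $1/p - 1/2 = 1/m$, and invoke the divergence-form smoothing estimate
\[
\|A^\rho e^{-\tau A} \nabla\cdot\phi\|_{L^2} \le C\,\tau^{-\rho - \frac{1}{2} - \frac{n}{2m}} e^{-\mu \tau}\|\phi\|_{L^p},
\]
whose kernel is integrable on $(0, \infty)$ provided $\rho + \tfrac{1}{2} + \tfrac{n}{2m} < 1$; this is achievable for some $\rho \in (0, 1/2)$ precisely because $m > n$. By the derivative condition \eqref{dchi1} the sensitivities $\chi_1, \chi_2$ are non-increasing, hence $\chi_i(v), \chi_i(w) \le \chi_i(0)$, and H\"older's inequality gives
\[
\|F(\cdot, s)\|_{L^p} \le C\|u(\cdot, s)\|_{L^2}\bigl(\|\nabla v(\cdot, s)\|_{L^m} + \|\nabla w(\cdot, s)\|_{L^m}\bigr).
\]
Combining the $L^2$-estimate of $u$ from Lemma~\ref{Lem:uL2} with the choice $r \le q$ (so that $m < r \le q$ and the embedding $L^q \hookrightarrow L^m$ absorbs $\|\nabla w\|_{L^m}$ into a constant via \eqref{wLq}), together with the $L^p$-interpolation
\[
\|\nabla v(\cdot, s)\|_{L^m} \le \|\nabla v(\cdot, s)\|_{L^r}^{\lambda}\|\nabla v(\cdot, s)\|_{L^2}^{1-\lambda}, \qquad \lambda := \frac{1/m - 1/2}{1/r - 1/2} \in (0, 1),
\]
and the uniform $L^2$-bound on $\nabla v$ from Lemma~\ref{Lem:dvL2}, reduces matters to estimating $\|\nabla v(\cdot, s)\|_{L^r}$. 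For $s \in [\sigma_3, t)$ this quantity is at most $M_r(T)$ by definition, while for $s \in (t_0, \sigma_3)$ classical regularity of the solution on the compact interval $[t_0, \sigma_3] \subset (0, \tmax)$ bounds it by a constant. Assembling the pieces yields $\|A^\rho u(\cdot, t)\|_{L^2} \le C\bigl(1 + M_r(T)^{\lambda}\bigr)$ for all $t \in (\sigma_1, T)$, and the conclusion follows with $a := \lambda$.

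The subtle point is the simultaneous calibration of the parameters $\rho, p, m, \lambda, r$: one needs $m > n$ so that the semigroup kernel is integrable with some $\rho \in (0, 1/2)$, $m < r$ so that $\lambda < 1$ (hence $a \in (0, 1)$ as demanded), and $m \le q$ so that $\|\nabla w\|_{L^m}$ is uniformly controlled. Since $r > n$ and $q > n$, the intersection $(n, r) \cap (n, q]$ is non-empty (e.g.\ by restricting to $r \in (n, q]$), so a compatible $m$ exists. It is precisely the monotonicity of $\chi_1, \chi_2$ encoded in \eqref{dchi1} that trivialises the H\"older step and spares one from enforcing any further a priori $L^\infty$-bound on $v$ or $w$.
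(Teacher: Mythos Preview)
Your proposal is correct and follows essentially the same route as the paper: Duhamel representation against $A$, fractional-power smoothing on the linear pieces, a divergence-form $L^p\!\to\! L^2$ smoothing estimate on the drift term, H\"older to separate $u$ from the gradients, interpolation of $\|\nabla v\|$ between $L^2$ and $L^r$, and classical regularity on $[t_0,\sigma_3]$ to handle the early interval. The only differences are cosmetic---you start the Duhamel formula at $t_0=\sigma_1/2$ rather than at $0$, and you package the semigroup estimate in one step rather than splitting $e^{-(t-s)A}$ into two halves---so your parameters $(m,p,\lambda)$ play exactly the role of the paper's $(\tfrac{2q}{2-q},q,a)$.
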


\begin{proof}
Noting that $\frac{2n}{n+2}=\frac{2}{1+\frac{2}{n}} \le \frac65<\frac32$ for $n \le 3$, 
we can find $\rho \in (0,\frac{1}{2})$ such that $\frac{2n}{n+2(1-2\rho)}<\frac{3}{2}$. 
We now let $q \in (\frac{2n}{n+2(1-2\rho)}, \frac{3}{2})$ 
and $r \in (n, \min\{\frac{2q}{2-q}, \frac{2n}{(n-2)_+}\})$. 
Also, we set $\varphi:=\chi_1(v)\nabla v-\chi_2(w)\nabla w$. 
From the first equation in \eqref{ARC} we infer that for all $t \in (0,T)$, 
\begin{align}\label{Az}
	   \|A^\rho u (\cdot,t)\|_{L^2(\Omega)}
	   &\le \|A^\rho e^{-tA} u_0\|_{L^2(\Omega)}
	\notag\\
	   &\quad\, 
	          +\int_0^t \big\|A^\rho e^{-(t-s)A} 
	             \nabla \cdot \big(u(\cdot,s)\varphi(\cdot,s)\big)\big\|_{L^2(\Omega)}\,ds
	\notag\\
       &\quad\, 
              +\delta\int_0^t \|A^\rho e^{-(t-s)A} u (\cdot,s)\|_{L^2(\Omega)}\,ds.
\end{align}
Here we see from \cite[Section~2]{HW-2005} and \eqref{uL2} 
that there exists $\lam>0$ such that
\begin{align}
	           \|A^\rho e^{-tA} u_0\|_{L^2(\Omega)}
	   &\le c_1t^{-\rho} e^{-\lam t}\|u_0\|_{L^2(\Omega)}
	   \le c_2t^{-\rho} e^{-\lam t},\label{zsigma1}
	\\
	           \|A^\rho e^{-(t-s)A} u (\cdot,s)\|_{L^2(\Omega)}
       &\le c_1(t-s)^{-\rho} e^{-\lam(t-s)}\|u (\cdot, s)\|_{L^2(\Omega)}
       \le c_2(t-s)^{-\rho} e^{-\lam(t-s)}\label{zs}
\end{align}
for all $s \in (0, t)$ with some $c_1, c_2>0$. 
Also, again by \cite[Section~2]{HW-2005} and semigroup estimates, we obtain
\begin{align*}
        \big\|A^\rho e^{-(t-s)A} 
        \nabla \cdot \big(u(\cdot,s)\varphi(\cdot,s)\big)\big\|_{L^2(\Omega)}  
        &= \big\|A^\rho e^{-\frac{t-s}{2}A} \cdot e^{-\frac{t-s}{2}A}
                \nabla \cdot \big(u(\cdot,s)\varphi(\cdot,s)\big)\big\|_{L^2(\Omega)}
     \\
        &\le c_3\Big(\frac{t-s}{2}\Big)^{-\rho-\frac{n}{2}(\frac{1}{q}-\frac{1}{2})}
               e^{-\frac{\lam(t-s)}{2}}\|u(\cdot,s)\varphi(\cdot,s)\|_{L^q(\Omega)}
\end{align*}
for all $s \in (0, t)$ with some $c_3>0$. 
We now estimate $\|u(\cdot,s)\varphi(\cdot,s)\|_{L^q(\Omega)}$. 
We observe from the H\"older inequality and the interpolation inequality that
\begin{align}\label{upLq}
	          \|u(\cdot,s)\varphi(\cdot,s)\|_{L^q(\Omega)} 
	   &\le \|u (\cdot,s)\|_{L^2(\Omega)}\|\varphi(\cdot,s)\|_{L^{\frac{2q}{2-q}}(\Omega)}
	\notag\\
	   &\le \|u (\cdot,s)\|_{L^2(\Omega)}
	          \|\varphi(\cdot,s)\|_{L^r(\Omega)}^a
	          \|\varphi(\cdot,s)\|_{L^2(\Omega)}^{1-a},
\end{align}
where $a:=\frac{\frac{1}{2}-\frac{2-q}{2q}}{\frac{1}{2}-\frac{1}{r}} \in (0,1)$. 
Here, noting from \eqref{dchi1} that $\chi_1(s) \le \chi_1(0)$, $\chi_2(s) \le \chi_2(0)$, 
we see from \eqref{wLq} that 
\begin{align}\label{pLr}
	\|\varphi(\cdot,s)\|_{L^r(\Omega)}
	&\le c_4\big(\|\nabla v(\cdot,s)\|_{L^r(\Omega)}+\|\nabla w(\cdot,s)\|_{L^r(\Omega)}\big)
	\notag\\
	&\le c_5\big(\|\nabla v(\cdot,s)\|_{L^r(\Omega)}+1\big)
\end{align}
for all $s \in (0, t)$ with some $c_4, c_5>0$. 
Similarly, we derive from \eqref{dwL21} and \eqref{dvL2est} that
\begin{align}\label{pL2}
	\|\varphi(\cdot,s)\|_{L^2(\Omega)} \le c_6
\end{align}
for all $s \in (0, t)$ with some $c_6>0$. 
Combining \eqref{pLr} and \eqref{pL2} with \eqref{upLq} 
and recalling the definition of $M_r(T)$ imply
\begin{align}\label{zphi1}
	        \|u(\cdot,s)\varphi(\cdot,s)\|_{L^q(\Omega)} 
	&\le c_7\Big(1+M_r^a(T)\Big)
\end{align}
for all $s \in (0, t)$ with some $c_7>0$. 
On the other hand, 
noting from Lemma~\ref{LSE} that $\sup_{s \in (0,\sigma_3)}\|\nabla v(\cdot,s)\|_{L^r(\Omega)}<\infty$, 
we have from \eqref{pLr} that 
$\sup_{s \in (0,\sigma_3)}\|\varphi(\cdot,s)\|_{L^r(\Omega)}<\infty$, 
which together with \eqref{upLq} that
\begin{align}\label{zphi11}
	\|u(\cdot,s)\varphi(\cdot,s)\|_{L^q(\Omega)}  \le c_8
\end{align}
for all $s \in (0,\sigma_3)$ with some $c_8>0$. 
Collecting \eqref{zsigma1}, \eqref{zs}, \eqref{zphi1} and \eqref{zphi11} 
into \eqref{Az} yields
\begin{align*}
	           \|A^\rho u (\cdot,t)\|_{L^2(\Omega)}
	   &\le  c_2t^{-\rho} e^{-\lam t}
	           +c_9\Big(1+M_r^a(T)\Big)
	            \int_0^t\Big(\frac{t-s}{2}\Big)^{-\rho-\frac{n}{2}(\frac{1}{q}-\frac{1}{2})}
	            e^{-\frac{\lam(t-s)}{2}}\,ds
	\\
	   &\quad\,
	   		   +c_2\delta\int_0^t(t-s)^{-\rho} e^{-\lam(t-s)}\,ds
\end{align*}
for all $t \in (0, T)$ with some $c_9>0$. 
Noticing from the choice of $\rho$ that $\rho+\frac{n}{2}(\frac{1}{q}-\frac{1}{2})<1$, 
we see that 
\begin{align*}
				\|A^\rho u (\cdot,t)\|_{L^2(\Omega)}
		&\le  c_{10}\sigma_1^{-\rho}
	    	    +c_{11}\Big(1+M_r^a(T)\Big)
	      \le c_{12}\Big(1+M_r^a(T)\Big)
\end{align*}
for all $t \in (0, T)$ with some $c_{10}, c_{11}, c_{12}>0$, which means that \eqref{AzL2} holds. 
\end{proof}

We now derive $L^2$-boundedness of $\Delta w$. 

\begin{lem}\label{Lem:Lapw}
There exists $C>0$ such that for all $T \in (\sigma_2, \tmax)$, 
\begin{align}\label{LapwL2}
\|\Delta w(\cdot,t)\|_{L^2(\Omega)} \le C\Big(1+M_r^a(T)\Big)
\end{align}
for all $t \in (\sigma_2, T)$, 
where $a \in (0,1)$ is obtained in {\rm Lemma~\ref{Lem:AzL2}}. 
\end{lem}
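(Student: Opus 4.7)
The plan is to combine Duhamel's formula for the third equation in \eqref{ARC} with sectorial smoothing estimates for $A = -\Delta+\delta$, and then to insert the fractional bound \eqref{AzL2} from Lemma~\ref{Lem:AzL2}. Rewriting $w_t+Aw = \gamma u$ and using variation of constants with base point $t_0$ yields
\[
w(\cdot,t) = e^{-(t-t_0)A}w(\cdot,t_0) + \gamma\int_{t_0}^t e^{-(t-s)A} u(\cdot,s)\,ds.
\]
Since $\Delta w = \delta w - Aw$, and $w$ is already $L^2$-bounded on $(0,\tmax)$ (for instance from \eqref{wLq} combined with the $L^1$-bound in Lemma~\ref{L1vw} via the embedding $W^{1,q}(\Omega)\hookrightarrow L^\infty(\Omega)$ valid for $q>n$), it suffices to control $\|Aw(\cdot,t)\|_{L^2(\Omega)}$ up to an additive constant.

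For $t\in(\sigma_2,T)$ the crucial choice is to set $t_0:=t-(\sigma_2-\sigma_1)\ge \sigma_1$. This keeps $t-t_0$ equal to the fixed positive constant $\sigma_2-\sigma_1$, and simultaneously guarantees that the fractional estimate from Lemma~\ref{Lem:AzL2} is available throughout the integration window $[t_0,t]$. Applying $A$ to the Duhamel identity, the ``initial'' term $Ae^{-(t-t_0)A}w(\cdot,t_0)$ is handled by the standard bound $\|Ae^{-\tau A}\|_{L^2\to L^2}\le C\tau^{-1}e^{-\lambda\tau}$ together with the uniform $L^2$-bound on $w(\cdot,t_0)$; this contributes only $O(1)$.

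For the convolution term I would factor
\[
Ae^{-(t-s)A}u(\cdot,s) = A^{1-\rho}e^{-(t-s)A}\cdot A^\rho u(\cdot,s),
\]
bound the operator norm of $A^{1-\rho}e^{-(t-s)A}$ by $C(t-s)^{-(1-\rho)}e^{-\lambda(t-s)}$ (legitimate as $A$ is sectorial with spectrum bounded below by $\delta>0$), and insert \eqref{AzL2} to dominate $\|A^\rho u(\cdot,s)\|_{L^2(\Omega)}$ by $C(1+M_r^a(T))$ uniformly in $s\in[t_0,t]$. Because $\rho\in(0,\tfrac12)$ forces $1-\rho<1$, the integral $\int_{t_0}^t(t-s)^{-(1-\rho)}e^{-\lambda(t-s)}\,ds$ is bounded by a finite constant independent of $t$ and $T$. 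Summing the contributions produces $\|Aw(\cdot,t)\|_{L^2(\Omega)}\le C(1+M_r^a(T))$ and hence \eqref{LapwL2}.

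I do not anticipate a genuine obstacle: once Lemma~\ref{Lem:AzL2} is in place, this is a routine sectorial semigroup argument. The single point deserving care is the choice of the back-shift $t_0$: it must stay at least $\sigma_1$ (so that the fractional estimate for $A^\rho u$ holds throughout the convolution window) while $t-t_0$ must remain bounded away from zero (so that the $\tau^{-1}$ singularity in the homogeneous term does not blow up). The interval $(\sigma_1,\sigma_2,\sigma_3)$ introduced at the start of Section~\ref{Sec5} was evidently arranged precisely to accommodate this.
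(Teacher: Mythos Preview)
Your proposal is correct and follows essentially the same approach as the paper: Duhamel representation for $w$, splitting $A^{1}e^{-(t-s)A}u = A^{1-\rho}e^{-(t-s)A}\cdot A^\rho u$, and inserting the fractional bound \eqref{AzL2}. The only cosmetic differences are that the paper uses a fixed base point $\sigma_1$ (rather than your sliding $t_0=t-(\sigma_2-\sigma_1)$) and passes through $\|A^\eta w\|_{L^2}$ for some $\eta\in(1,1+\rho)$ together with the embedding $D(A^\eta)\hookrightarrow W^{2,2}(\Omega)$, whereas you go directly via $\Delta w=\delta w-Aw$; neither variation changes the substance of the argument.
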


\begin{proof}
Let $\rho \in (0,\frac{1}{2})$ be as in Lemma~\ref{Lem:AzL2} 
and let $\eta \in (1, 1+\rho)$. 
Then we note that $2-\frac{n}{2}<2\rho-\frac{n}{2}$. 
By a similar argument as in the proof of \cite[Lemma~4.3]{LM-2017-2} 
and Lemma~\ref{Lem:AzL2}, we obtain
\begin{align*}
	       \|w(\cdot,t)\|_{W^{2,2}(\Omega)}
	&\le c_1\|A^\eta 
	w(\cdot,t) \|_{L^2(\Omega)}
	\\
	&\le c_1\|A^\eta 
	e^{-(t-\sigma_1)A}w(\cdot,\sigma_1) \|_{L^2(\Omega)}
	       +c_1\int_{\sigma_1}^t \|A^\eta 
	       e^{-(t-s)A} u(\cdot,s) \|_{L^2(\Omega)}\,ds
	\\
	&\le c_2(t-\sigma_1)^{-\eta
	}\|w(\cdot,\sigma_1) \|_{L^2(\Omega)}
	       +c_2\int_{\sigma_1}^t \|A^{\eta
	       	-\rho} e^{-(t-s)A} \cdot 
	                                            A^\rho u(\cdot,s) \|_{L^2(\Omega)}\,ds
	\\
	&\le c_3(t-\sigma_1)^{-\eta
	}+c_2\int_{\sigma_1}^t (t-s)^{-(\eta
	-\rho)}e^{-\lam t}
	\|A^\rho u(\cdot,s) \|_{L^2(\Omega)}\,ds
	\\
	&\le c_4\Big(1+(\sigma_2-\sigma_1)^{-\eta
	}+M_r^a(T)\Big)
\end{align*}
for all $t \in (\sigma_2, T)$ with some $c_1, c_2, c_3, c_4>0$, 
which leads to \eqref{LapwL2}. 
\end{proof}

Finally, we prove $L^r$-boundedness of $\nabla v$.

\begin{lem}\label{Lem:dvLr}
There exists $r>n$ such that 
\begin{align}\label{dvLrest}
\|\nabla v(\cdot,t)\|_{L^r(\Omega)} \le C
\end{align}
for all $t \in (0, \tmax)$ with some $C>0$. 
\end{lem}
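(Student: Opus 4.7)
The plan is to apply the variation-of-constants formula to the second equation of \eqref{ARC} starting at time $\sigma_2$, take its gradient, estimate in $L^r$ via the smoothing properties of the Neumann heat semigroup $(e^{\tau(\Delta-\beta)})_{\tau \ge 0}$, and close a sublinear inequality for $M_r(T)$ by invoking the bounds already established in Lemmas~\ref{Lem:uL2}, \ref{Lem:vLinf}, \ref{Lem:dvL2} and \ref{Lem:Lapw}. First fix $r \in (n, \frac{2n}{(n-2)_+})$, which is nonempty precisely because $n \le 3$, so that the exponent $\frac{1}{2}+\frac{n}{2}(\frac{1}{2}-\frac{1}{r})$ lies strictly below $1$ and the singular factor in $\|\nabla e^{\tau(\Delta-\beta)} f\|_{L^r(\Omega)} \le c\tau^{-\frac{1}{2}-\frac{n}{2}(\frac{1}{2}-\frac{1}{r})} e^{-\lambda \tau}\|f\|_{L^2(\Omega)}$ is time-integrable. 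Writing
\begin{align*}
\nabla v(\cdot,t) &= \nabla e^{(t-\sigma_2)(\Delta-\beta)} v(\cdot,\sigma_2)\\
&\quad + \int_{\sigma_2}^t \nabla e^{(t-s)(\Delta-\beta)}\big[\alpha u(\cdot,s) + \nabla \cdot (v\xi(w)\nabla w)(\cdot,s)\big]\,ds,
\end{align*}
the first term is uniformly bounded for $t \in (\sigma_3,T)$ thanks to semigroup smoothing and Lemma~\ref{Lem:vLinf}, while the $u$-integral is harmless by Lemma~\ref{Lem:uL2} combined with the integrability of the kernel.

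The core work lies in controlling the contribution of $\nabla \cdot (v\xi(w)\nabla w)$ in $L^2$. I would expand
\begin{align*}
\nabla \cdot (v\xi(w)\nabla w) = \xi(w)\nabla v \cdot \nabla w + v\xi'(w)|\nabla w|^2 + v\xi(w)\Delta w,
\end{align*}
and bound each piece. Using $\|v\|_{L^\infty(\Omega)} \le C$ (Lemma~\ref{Lem:vLinf}), $\xi(s) \le \xi(0)$ (from \eqref{dchi1} and \eqref{xi}), $\xi'(s) \le K_3$ (from \eqref{dxi}), and $\|\nabla w\|_{L^q(\Omega)} \le C$ from \eqref{wLq} with $q$ chosen close enough to $\frac{2n}{(n-2)_+}$ to satisfy $q \ge 4$ and $q > \frac{2r}{r-2}$ (both feasible since $r$ can be taken close to $\frac{2n}{(n-2)_+}$), the quadratic piece $v\xi'(w)|\nabla w|^2$ is bounded in $L^2$ by a constant, whereas $v\xi(w)\Delta w$ is controlled in $L^2$ by $C(1+M_r^a(T))$ through Lemma~\ref{Lem:Lapw}. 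For the mixed term $\xi(w)\nabla v\cdot \nabla w$, H\"older's inequality with exponents $\frac{1}{\hat r}+\frac{1}{\hat s}=\frac{1}{2}$ and $\hat s \in (\frac{2r}{r-2},\,q]$ forces $\hat r<r$, and interpolation $\|\nabla v\|_{L^{\hat r}(\Omega)} \le \|\nabla v\|_{L^2(\Omega)}^{\mu}\|\nabla v\|_{L^r(\Omega)}^{1-\mu}$ together with Lemma~\ref{Lem:dvL2} yields $\|\xi(w)\nabla v\cdot \nabla w\|_{L^2(\Omega)} \le C M_r^{1-\mu}(T)$ for some $\mu \in (0,1)$.

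Altogether these estimates produce $\|\nabla \cdot (v\xi(w)\nabla w)(\cdot,s)\|_{L^2(\Omega)} \le C(1+M_r^b(T))$ with $b := \max\{a, 1-\mu\} \in (0,1)$. Substituting into the Duhamel representation and performing the time integration against the integrable kernel gives $\|\nabla v(\cdot,t)\|_{L^r(\Omega)} \le C(1+M_r^b(T))$ for all $t \in (\sigma_3,T)$, hence $M_r(T) \le C(1+M_r^b(T))$; since $b<1$, Young's inequality yields $M_r(T) \le C$ uniformly in $T \in (\sigma_3,\tmax)$, and combining with the trivial boundedness of $\|\nabla v(\cdot,t)\|_{L^r(\Omega)}$ on $[0,\sigma_3]$ coming from Lemma~\ref{LSE} delivers \eqref{dvLrest}. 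The main obstacle is the simultaneous calibration of the exponents $r,q,\hat r,\hat s$ and the resulting $\mu$ and $a$: the chain $r > n$, $q < \frac{2n}{(n-2)_+}$, $q > \max\{4, \frac{2r}{r-2}\}$ and $\hat r < r$ must be jointly achievable and must leave $b$ strictly below $1$, which is ultimately ensured by the dimensional restriction $n \le 3$.
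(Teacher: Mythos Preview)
Your proposal is correct and follows essentially the same route as the paper: Duhamel representation of $v$, expansion of $\nabla\cdot(v\xi(w)\nabla w)$ into the three pieces $\xi(w)\nabla v\cdot\nabla w$, $v\xi'(w)|\nabla w|^2$, $v\xi(w)\Delta w$, control of the last via Lemma~\ref{Lem:Lapw} and of the first via interpolation between $L^2$ and $L^r$ using Lemma~\ref{Lem:dvL2}, and closure of a sublinear inequality for $M_r(T)$. The only notable difference is that you estimate $\nabla\cdot(v\xi(w)\nabla w)$ directly in $L^2$, whereas the paper introduces an auxiliary exponent $k$ with $\max\{\tfrac{n}{2},\tfrac{2q}{q+2},\tfrac{nr}{n+r}\}<k<\min\{\tfrac{n}{n-2},\tfrac{qr}{q+r},2\}$ and works in $L^k$; your choice $k=2$ is the endpoint of that range (the paper's strict inequality $k<2$ is only used to embed $L^2\hookrightarrow L^k$, which is vacuous at $k=2$), and your exponent conditions $q\ge 4$, $q>\tfrac{2r}{r-2}$ are exactly what the paper's constraint $(q-2)(r-2)>4$ reduces to in this case.
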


\begin{proof}
Let $q \in (n,\frac{2n}{(n-2)_+})$, $r\in (2, \frac{2n}{(n-2)_+})$. 
We will find a constant $c_1>0$ such that 
$\sup_{t \in (\sigma_3,T)} \|\nabla v(\cdot,t)\|_{L^r(\Omega)} \le c_1$ 
for all $T \in (\sigma_3,\tmax)$. 
We now rewrite the second equation in \eqref{ARC} by the Duhamel formula as
\begin{align*}
	v(\cdot,t)
	&= e^{(t-\sigma_3)(\Delta-\beta)}v(\cdot,\sigma_3)
	-\int_{\sigma_3}^t e^{(t-s)(\Delta-\beta)}
	\nabla \cdot \Big(v(\cdot,s)\xi(w(\cdot,s))\nabla w(\cdot,s)\Big)\,ds\\
	&\quad\,
	+\alpha \int_{\sigma_3}^t e^{(t-s)(\Delta-\beta)}u(\cdot,s)\,ds
\end{align*}
for all $t \in (\sigma_3, T)$, which implies 
\begin{align}\label{dvLr}
	\|\nabla v(\cdot,t)\|_{L^r(\Omega)}
	&= \|\nabla e^{(t-\sigma_3)(\Delta-\beta)}v(\cdot,\sigma_3)\|_{L^r(\Omega)}
	\notag\\
	&\quad\,
	+\int_{\sigma_3}^t \Big\|\nabla e^{(t-s)(\Delta-\beta)}
	\nabla \cdot \Big(v(\cdot,s)\xi(w(\cdot,s))\nabla w(\cdot,s)\Big)\Big\|_{L^r(\Omega)}\,ds
	\notag\\
	&\quad\,
	+\alpha \int_{\sigma_3}^t \|\nabla e^{(t-s)(\Delta-\beta)}u(\cdot,s)\|_{L^r(\Omega)}\,ds
	\notag\\
	&=: \I_1(\cdot,t)+\I_2(\cdot,t)+\I_3(\cdot,t)
\end{align}
for all $t \in (\sigma_3, T)$. 
We first estimate $\I_1$. 
Using semigroup estimates, we can estimate as
\begin{align}\label{I1estimate}
	\I_1(\cdot,t) \le c_2
\end{align}
for all $t \in (\sigma_3, T)$ with some $c_2>0$. 
We next estimate $\I_2$. 
Let $\max\{\frac{n}{2}, \frac{2q}{q+2}, \frac{nr}{n+r}\}<k<\min\{\frac{n}{n-2}, \frac{qr}{q+r}, 2\}$. 
Then we derive from semigroup estimates that 
\begin{align}\label{I_2est}
	     \I_2(\cdot,t)
	\le c_3\int_{\sigma_3}^t (1+(t-s)^{-\frac{1}{2}-\frac{n}{2}(\frac{1}{k}-\frac{1}{r})})
	     \Big\|\nabla \cdot \Big(v(\cdot,s)\xi(w(\cdot,s))\nabla w(\cdot,s)\Big)\Big\|_{L^k(\Omega)}\,ds
\end{align}
for all $t \in (\sigma_3, T)$ with some $c_3>0$. 
Here, noting from the conditions \eqref{dchi1}, \eqref{xi} that 
$\xi(s) \le \xi(0)$, we observe from 
the condition \eqref{dxi} and the estimate \eqref{Linfestv} that
\begin{align}\label{I_2int}
	&\Big\|\nabla \cdot \Big(v(\cdot,s)\xi(w(\cdot,s))\nabla w(\cdot,s)\Big)\Big\|_{L^k(\Omega)}
	\notag\\
	&\quad\,\le
	  \|\xi(w(\cdot,s))\nabla v(\cdot,s) \cdot \nabla w(\cdot,s)\|_{L^k(\Omega)}
	  +\|v(\cdot,s)\xi'(w(\cdot,s))|\nabla w(\cdot,s)|^2\|_{L^k(\Omega)}
	\notag\\
	&\qquad\ \,
	  +\|v(\cdot,s)\xi(w(\cdot,s))\Delta w(\cdot,s)\|_{L^k(\Omega)}
	\notag\\
	&\quad\,\le c_4\|\nabla v(\cdot,s) \cdot \nabla w(\cdot,s)\|_{L^k(\Omega)}
	                  +c_4\big\||\nabla w(\cdot,s)|^2\big\|_{L^k(\Omega)}
	                  +c_4\|\Delta w(\cdot,s)\|_{L^k(\Omega)}
\end{align}
for all $s \in (\sigma_3, t)$ with some $c_4>0$. 
We now estimate three terms appearing on the right-hand side of 
this inequality. 
Noticing that $\frac{n}{2}<k<\frac{n}{n-2}$, we see from \eqref{wLq} that 
\begin{align}\label{dwLk2}
	\big\||\nabla w(\cdot,s)|^2\big\|_{L^k(\Omega)} \le c_5
\end{align}
for all $s \in (\sigma_3, t)$ with some $c_5>0$. 
Also, since $k<2$, we infer from \eqref{LapwL2} that
\begin{align}\label{LapwLk2}
	\|\Delta w(\cdot,s)\|_{L^k(\Omega)} 
	\le c_6\sup_{t \in (\sigma_3, T)}\|\nabla v(\cdot,t)\|_{L^r(\Omega)}^a
\end{align}
for all $s \in (\sigma_3, t)$ with some $c_6>0$
where $a \in (0,1)$ is a constant obtained in Lemma~\ref{Lem:AzL2}. 
We next estimate $\|\nabla v(\cdot,s) \cdot \nabla w(\cdot,s)\|_{L^k(\Omega)}$. 
The H\"older inequality and the estimate \eqref{wLq} lead to
\begin{align}\label{dvdw}
	     \|\nabla v(\cdot,s) \cdot \nabla w(\cdot,s)\|_{L^k(\Omega)}
	&\le \|\nabla v(\cdot,s)\|_{L^{\frac{kq}{q-k}}(\Omega)}
	     \|\nabla w(\cdot,s)\|_{L^q(\Omega)}
	\notag\\
	&\le c_7\|\nabla v(\cdot,s)\|_{L^{\frac{kq}{q-k}}(\Omega)}
\end{align}
for all $s \in (\sigma_3, t)$ with some $c_7>0$, where $q \in (n, \frac{2n}{n-2})$. 
Moreover, since $\frac{2q}{q+2}<k<\frac{qr}{q+r}$, 
the interpolation inequality and the estimate \eqref{dvL2est} imply
\begin{align}\label{dvkq}
	     \|\nabla v(\cdot,s)\|_{L^{\frac{kq}{q-k}}(\Omega)}
	\le \|\nabla v(\cdot,s)\|_{L^2(\Omega)}^{\theta_1}\|\nabla v(\cdot,s)\|_{L^r(\Omega)}^{1-\theta_1}
	\le c_8\|\nabla v(\cdot,s)\|_{L^r(\Omega)}^{1-\theta_1}
\end{align}
for all $s \in (\sigma_3,t)$ with some $c_8>0$, 
where $\theta_1:=\frac{2[r(q-k)-kq]}{kq(r-2)} \in (0,1)$. 
A combination of \eqref{dvdw} and \eqref{dvkq} yields
\begin{align}\label{dvdw2}
	\|\nabla v(\cdot,s) \cdot \nabla w(\cdot,s)\|_{L^k(\Omega)}
	\le c_9\|\nabla v(\cdot,s)\|_{L^r(\Omega)}^{1-\theta_1}
\end{align}
for all $s \in (\sigma_3,t)$ with some $c_9>0$. 
Combining \eqref{dwLk2}, \eqref{LapwLk2} and \eqref{dvdw2} with \eqref{I_2int}, 
we have
\begin{align}\label{I_2int2}
	\Big\|\nabla \cdot \Big(v(\cdot,s)\xi(w(\cdot,s))\nabla w(\cdot,s)\Big)\Big\|_{L^k(\Omega)}
	\le c_{10}\sup_{t \in (\sigma_3, T)}\|\nabla v(\cdot,t)\|_{L^r(\Omega)}^{\theta_2}
	     +c_{10}
\end{align}
for all $s \in (\sigma_3,t)$ with some $c_{10}>0$, where $\theta_2:=\max\{a, 1-\theta_1\} \in (0,1)$. 
Furthermore, noting from $k>\frac{nr}{n+r}$ that 
$\frac{1}{2}+\frac{n}{2}(\frac{1}{k}-\frac{1}{r})<1$, 
we derive from \eqref{I_2est} and \eqref{I_2int2} that
\begin{align}\label{I_2estimate}
	\I_2(\cdot,t) 
	\le c_{11}\sup_{t \in (\sigma_3, T)}\|\nabla v(\cdot,t)\|_{L^r(\Omega)}^{\theta_2}
	     +c_{11}
\end{align}
for all $t \in (\sigma_3, T)$ with some $c_{11}>0$. 
We next estimate $\I_3$. 
Again by semigroup estimates and Lemma~\ref{Lem:uL2}, we infer
\begin{align*}
	\I_3(\cdot,t)
	&\le c_{12}\int_{\sigma_3}^t
	               (1+(t-s)^{-\frac{1}{2}-\frac{n}{2}(\frac{1}{2}-\frac{1}{r})})
	               e^{-\lam_1(t-s)}\|u(\cdot,s)\|_{L^2(\Omega)}\,ds
	\\
	&\le c_{13}\int_{\sigma_3}^t
	(1+(t-s)^{-\frac{1}{2}-\frac{n}{2}(\frac{1}{2}-\frac{1}{r})})
	e^{-\lam_1(t-s)}\,ds
\end{align*}
for all $t \in (\sigma_3, T)$ with some $c_{12}, c_{13}>0$. 
Noticing from $r\in (2, \frac{2n}{(n-2)_+})$ that 
$\frac{1}{2}+\frac{n}{2}(\frac{1}{2}-\frac{1}{r})<1$, we have
\begin{align}\label{I_3estimate}
	\I_3(\cdot,t)\le c_{14}
\end{align}
for all $t \in (\sigma_3, T)$ with some $c_{14}>0$. 
Collecting \eqref{I1estimate}, \eqref{I_2estimate} and \eqref{I_3estimate} 
into \eqref{dvLr} yields
\begin{align*}
	\sup_{t \in (\sigma_3,T)} \|\nabla v(\cdot,t)\|_{L^r(\Omega)} 
	\le c_{11}\sup_{t \in (\sigma_3,T)} \|\nabla v(\cdot,t)\|_{L^r(\Omega)}^{1-\theta_2}+c_{15}
\end{align*}
for all $T \in (\sigma_3, \tmax)$ with some $c_{15}>0$, which implies 
$\sup_{t \in (\sigma_3,T)} \|\nabla v(\cdot,t)\|_{L^r(\Omega)} \le c_{16}$ for all $T \in (\sigma_3,\tmax)$ with some $c_{16}>0$. 
Also, in view of Lemma~\ref{LSE} it follows that $\sup_{t \in (0,\sigma_3)}\|\nabla v(\cdot,t)\|_{L^r(\Omega)} \le c_{17}$ with some $c_{17}>0$. 
Thus we arrive at the conclusion. 
\end{proof}

\section{$L^\infty$-boundedness of $u$} \label{Sec6}

We now obtain $L^\infty$-boundedness of $u$.

\begin{lem}\label{Lem:uLinf}
There exists $C>0$ such that
\begin{align*}
\|u(\cdot,t)\|_{L^\infty(\Omega)} \le C
\end{align*}
for all $t \in (0, \tmax)$. 
\end{lem}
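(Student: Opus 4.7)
The plan is to combine the Duhamel representation for $u$ with smoothing properties of the Neumann heat semigroup, together with an interpolation argument that makes the $L^\infty$-norm of $u$ self-controlled. Setting $\varphi:=\chi_1(v)\nabla v-\chi_2(w)\nabla w$ and recalling from \eqref{dchi1} that $\chi_1(s)\le\chi_1(0)$ and $\chi_2(s)\le\chi_2(0)$, one has
\begin{align*}
\sup_{t\in(0,\tmax)}\|\varphi(\cdot,t)\|_{L^\rho(\Omega)}<\infty
\end{align*}
for some fixed $\rho>n$, by virtue of Lemma~\ref{Lem:dvLr} and \eqref{wLq}. This $\rho>n$ is the key strict inequality that will drive the rest of the argument.

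For any fixed $T\in(0,\tmax)$, I would set $M(T):=\sup_{t\in(0,T)}\|u(\cdot,t)\|_{L^\infty(\Omega)}$, which is finite by Lemma~\ref{LSE}, and express $u$ via the variation-of-constants formula
\begin{align*}
u(\cdot,t)=e^{(t-t_0)\Delta}u(\cdot,t_0)-\int_{t_0}^t e^{(t-s)\Delta}\nabla\cdot\bigl(u(\cdot,s)\varphi(\cdot,s)\bigr)\,ds,\qquad t_0:=(t-1)_+,
\end{align*}
and invoke the standard estimate (cf.\ \cite{HW-2005, W-2010-1})
\begin{align*}
\|e^{\tau\Delta}\nabla\cdot f\|_{L^\infty(\Omega)}\le c_1\bigl(1+\tau^{-\frac{1}{2}-\frac{n}{2p}}\bigr)e^{-\lam\tau}\|f\|_{L^p(\Omega)}\qquad(p>n),
\end{align*}
whose time singularity is integrable over $(t_0,t)$ precisely because $p>n$. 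The first summand $e^{(t-t_0)\Delta}u(\cdot,t_0)$ is uniformly bounded in $L^\infty(\Omega)$ by smoothing from $L^2$ applied to \eqref{uL2} (using $\|u_0\|_{L^\infty(\Omega)}<\infty$ to cover small $t$).

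I would then choose $p$ with $n<p<\rho$, and use H\"older together with the interpolation $\|u\|_{L^a(\Omega)}\le\|u\|_{L^2(\Omega)}^{2/a}\|u\|_{L^\infty(\Omega)}^{1-2/a}$ for $a\in(2,\infty)$ defined by $\frac{1}{a}:=\frac{1}{p}-\frac{1}{\rho}\in\bigl(0,\tfrac{1}{2}\bigr)$, to obtain
\begin{align*}
\|u(\cdot,s)\varphi(\cdot,s)\|_{L^p(\Omega)}\le\|u(\cdot,s)\|_{L^a(\Omega)}\|\varphi(\cdot,s)\|_{L^\rho(\Omega)}\le c_2 M(T)^{1-2/a}
\end{align*}
for all $s\in(t_0,t)$, where $\|u\|_{L^2(\Omega)}$ is absorbed into $c_2$ via Lemma~\ref{Lem:uL2}. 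Inserting this into the Duhamel identity yields an inequality of the form $M(T)\le c_3+c_4 M(T)^{1-2/a}$ with constants independent of $T\in(0,\tmax)$; since $1-\tfrac{2}{a}\in(0,1)$, a standard Young-type argument forces $M(T)\le C$ uniformly in $T$, which is the desired bound.

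The main delicacy lies in the simultaneous exponent constraints: $p>n$ is required to keep the time singularity in the semigroup estimate integrable, while $p<\rho$ is required so that the H\"older exponent $a$ is finite and the interpolation exponent $2/a$ strictly positive. The gap $\rho>n$ supplied by Lemma~\ref{Lem:dvLr} and \eqref{wLq} is exactly what permits such a choice, and it is precisely the strict inequality $1-\tfrac{2}{a}<1$ that turns the self-referential inequality for $M(T)$ into a genuine uniform bound.
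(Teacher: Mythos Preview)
Your argument is correct and matches the paper's approach: the paper explicitly says to proceed as in the proof of Lemma~\ref{Lem:vLinf}, which is precisely the Duhamel-plus-semigroup scheme you describe, using the bounds \eqref{dvLrest} and \eqref{wLq} on $\nabla v$ and $\nabla w$ in $L^\rho$ with $\rho>n$ to control $\varphi=\chi_1(v)\nabla v-\chi_2(w)\nabla w$, and closing via a sublinear self-referential inequality for $\sup_{t\in(0,T)}\|u(\cdot,t)\|_{L^\infty(\Omega)}$. Your write-up simply makes explicit the exponent bookkeeping (the choice $n<p<\rho$ and $\frac{1}{a}=\frac{1}{p}-\frac{1}{\rho}\in(0,\frac{1}{2})$) that the paper leaves implicit.
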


\begin{proof}
Proceeding similarly as in the proof of Lemma~\ref{Lem:vLinf}
and using boundedness of $\chi_1, \chi_2, \xi$ as well as 
recalling the estimates \eqref{wLq} and \eqref{dvLrest}, 
we can arrive at the conclusion of this lemma. 
\end{proof}

We are now in the position to prove Theorem~\ref{mainthm}.

\begin{prth1.1}
 Let $\chi_1, \chi_2, \xi$ satisfy \eqref{chi1class}--\/\eqref{dxi} 
 with $K_1, K_2$ fulfilling \eqref{condipara}.  Then, a combination of Lemma~\ref{Lem:uLinf} and 
 the extensibility criterion \eqref{BU} leads to the end of the proof.
 \qed
\end{prth1.1}
%

\end{document}